\newcommand{\Ol}{{\mathcal O}}
\newcommand{\f}{\varphi}
\newcommand{\E}{{\mathcal E}}
\newcommand{\V}{{\mathcal V}}
\newcommand{\Un}{{\mathcal U}}
\newcommand{\C}{{\mathcal C}}
\newcommand{\El}{{\mathcal L}}
\newcommand{\li}{{\mathcal L}}
\newcommand{\ac}{{\mathcal H}}
\newcommand{\ol}{\overline {\mathcal L}}
\newcommand{\oc}{{\overline  C}}
\newcommand{\pu}{{\mathbb P^1}}
\newcommand{\proj}{\mathbb P}
\DeclareMathOperator{\loc}{\mathrm{Locus}}
\DeclareMathOperator{\cloc}{\mathrm{ChLocus}}
\DeclareMathOperator{\ch}{\mathrm{Chain}}
\newcommand{\ratcurves}{\textrm{Ratcurves}^n(X)}
\newcommand{\cone}{\textrm{NE}}
\DeclareMathOperator{\cycl}{N_1}
\DeclareMathOperator{\pic}{Pic}
\DeclareMathOperator{\Exc}{Exc}
\newcommand{\kd}{-K_X \cdot}
\newcommand{\W}{{\mathcal{W}}}
\newcommand{\conx}[1]{\cone\,(#1,X)}
\newcommand{\cycx}[1]{\cycl(#1,X)}
\newcommand{\twospan}[2]{\langle #1,#2 \rangle}
\newcommand{\rc}[2]{#1 \xymatrix{\ar@{-->}[r] & }{#2}}
\newtheorem{theorem}{Theorem}[section]
\newtheorem*{theorem*}{Theorem}
\newtheorem{lemma}[theorem]{Lemma}
\newtheorem{proposition}[theorem]{Proposition}
\newtheorem{corollary}[theorem]{Corollary}
\newtheorem{example}[theorem]{Example}
\theoremstyle{definition}
\newtheorem{definition}[theorem]{Definition}
\newtheorem{statement}[theorem]{}
\theoremstyle{remark}
\newtheorem{remark}[theorem]{Remark}
\newtheorem{claim}[theorem]{Claim}
\begin{document}

\title{Rationally cubic connected manifolds I:\\ manifolds covered by lines}

\author{Gianluca \textsc{Occhetta} and Valentina \textsc{Paterno}}

\maketitle

\label{startpage}

\begin{abstract} In this paper we study smooth complex projective polarized varieties $(X,H)$ of dimension $ n \ge 2$ which admit a dominating family $V$ of rational curves of $H$-degree $3$, such that two general points of $X$ may be joined by a curve parametrized by $V$, and such that there is a covering family of rational curves of $H$-degree one.\\
Our main result is that the Picard number of these manifolds is at most three, and that, if equality holds, $(X,H)$ has an adjuction theoretic scroll structure over a smooth variety.
\end{abstract}

\begin{figure}[bp]
\footnotesize{{\it Mathematics Subject Classification}: Primary 14M22; Secondary 14J40, 14E30. \\
{\it Key words and phrases}: Rationally connected manifolds, rational curves}
\end{figure}

\section{Introduction}

At the end of the last century the concepts of uniruled and rationally connected varieties were introduced as suitable higher dimensional analogues of ruled and rational surfaces. Uniruled varieties are algebraic varieties that are covered by rational curves, i.e. varieties that contain a rational curve through a general point. Among uniruled varieties, those that contain a rational curve through two general points are especially important. Varieties satisfying this property are called rationally connected and were introduced by  Campana in \cite{Cam92} and by Koll\'ar, Miyaoka and Mori in \cite{KoMiMo}.\par
\medskip
A natural problem about rationally connected varieties is to characterize them by means of bounding the degree of the rational curves connecting pairs of general points;  Ionescu and Russo have recently studied  \emph{conic-connected} manifolds embedded in projective space, i.e. projective manifolds such that two general points may be joined by a rational curve of degree $2$ with respect to a fixed very ample line bundle. In \cite{IoRu}, they proved that conic-connected manifolds $X\subset\mathbb{P}^N$  are Fano and have Picard number $\rho_X$ less than or equal to $2$ and classified the manifolds with Picard number two. A special case of conic-connected manifolds was previously studied
 by Kachi and Sato in \cite{KS}.\par
\medskip
In this paper we will consider  \emph{rationally cubic connected} manifolds (RCC-manifolds, for short), i.e. smooth complex projective polarized varieties $(X,H)$ of dimension $ n \ge 2$ which are rationally connected by rational curves of degree $3$ with respect to a fixed ample line bundle $H$, or equivalently which admit a dominating family $V$ of rational curves of degree $3$ with respect to $H$ such that two general points of $X$ may be joined by a curve parametrized by $V$. \par
\medskip
Unlike in the conic-connected case, there is no constant bounding the Picard number of RCC-manifolds, as shown in Example (\ref{mainex}); the same example shows also that there are RCC-manifolds which are not Fano manifolds and which do not carry a covering family of lines (i.e. curves of degree one with respect to $H$), this last property holding for all conic-connected manifolds of Picard number greater than one.\par
\medskip
These considerations lead us to divide our analysis of RCC-manifolds in two parts: in the present paper we will deal with the ones which are covered by lines, while the remaining ones, which present very different geometric features will be treated elsewhere \cite{OP2}.\\
Rationally cubic connected manifolds covered by lines present more similarity with conic-connected manifolds; the first one is the presence of a bound on the Picard number, with a description of the border case.  

\begin{theorem}\label{main1} Let $(X,H)$ be RCC with respect to a family $V$, and assume that $(X,H)$
is covered by lines. Then $\rho_X \le 3$, and if equality holds then there exist three families of rational curves of $H$-degree one, $\El^1,\li^2,\li^3$ with $[V]=[\li^1]+[\li^2]+[\li^3]$ ([\, ] denotes the numerical class), such that $X$ is  rc$(\li^1,\li^2,\li^3)$-connected.\end{theorem}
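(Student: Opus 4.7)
The plan is to prove, by degenerating the cubic in the Chow variety of $X$, that the class $[V]$ decomposes as a positive integral combination of numerical classes of at most three lines, and then to invoke the standard dictionary between rational chain connectedness by unsplit families and the Picard number.

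Fix a minimal (hence unsplit) covering family of lines, which exists by hypothesis. Take a general cubic $C\in V$ joining two general points $x,y\in X$ and consider the subfamily of $V$ consisting of cubics through $x$ and $y$; since $V$ is $2$-connecting, a standard dimension count shows this subfamily is positive-dimensional, so by Mori's bend-and-break the limit cycle $C_{0}$ in $\overline V$ is reducible. Its components are rational and their $H$-degrees sum to $3$, so $[V]$ is a positive integral combination of classes of rational curves of $H$-degree $1$ or $2$. If some component has degree $2$, I would argue that it too splits numerically into two line classes: the conic component moves as $C$ moves through $x,y$, and, combining this mobility with the fact that $X$ is covered by lines, a second bend-and-break forces the conic to further break off a line. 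Iterating, one obtains
\[
[V] = [\mathcal{L}^1]+[\mathcal{L}^2]+[\mathcal{L}^3],
\]
with each $\mathcal{L}^i$ an unsplit family of lines, and the degenerated cubic joining $x$ and $y$ becomes a chain of three lines from these families.

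For the Picard number bound, note that this chain witnesses the $\mathrm{rc}(\mathcal{L}^1,\mathcal{L}^2,\mathcal{L}^3)$-connectedness of $X$. The standard principle (in the style of Bonavero--Casagrande--Debarre--Druel and Koll\'ar--Miyaoka--Mori) that the number of linearly independent classes of unsplit families needed to rationally chain connect $X$ is at least $\rho_{X}$ then yields $\rho_{X}\leq 3$. Moreover, equality forces the three line classes to be linearly independent, so the families $\mathcal{L}^1,\mathcal{L}^2,\mathcal{L}^3$ are genuinely distinct, as the statement requires.

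The main obstacle, as I see it, is the reduction of an irreducible conic component to a sum of two line classes. Bend-and-break on the conic requires enough deformations, and \emph{a priori} one only knows that $X$ carries a covering family of lines, not of conics. I would try to circumvent this by working with the full one-parameter chain $C_{0}=A+B$ (line plus conic) instead of with $B$ alone: as $C$ varies in $V$ through $x,y$, the decomposition $A+B$ varies, and one argues that either $B$ meets the line family in a configuration that forces a join/split, or $B$ itself sweeps out a surface to which bend-and-break applies, using that the minimal $H$-degree of a rational curve on $X$ is $1$. Making this rigorous is the crux of the argument.
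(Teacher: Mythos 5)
There is a genuine gap, and it sits exactly where the paper has to work hardest. Your opening step assumes that the cubics of $V$ through two general points $x,y$ form a positive-dimensional family, so that bend-and-break yields a reducible limit cycle. This is false in general: the paper's notion of a \emph{generically unsplit} family is precisely the case where the double-evaluation map has finite general fibers, and this case really occurs for RCC-manifolds covered by lines (see Examples (\ref{onediv}) and (\ref{twodiv}), where through two general points there are only finitely many cubics; equivalently $-K_X\cdot V=n+1$). Bend-and-break gives you nothing there, and that is the case the paper spends all of Section 6 on: assuming $V$ generically unsplit and $X$ not connected by $V$-triplets of lines, it shows $\rho_X\le 2$ by working with reducible cycles through a \emph{single} general point (which exist as soon as $\rho_X>1$, by Lemma (\ref{numeq})), choosing a maximal numerically independent set of pairs (line, conic), and running dimension counts (Proposition (\ref{iowifam}), Lemma (\ref{locy})) together with the extremality Lemmas (\ref{face}), (\ref{face2}). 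Your proposal contains no substitute for this branch.

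The second problem is the step you yourself flag as the crux: forcing the conic component to split numerically into two lines. This cannot be done in the generality you want, because it is simply not true that $[V]$ is always a sum of three line classes. For $(X,H)\simeq(\proj^t\times\proj^{n-t},\Ol(1,2))$ the general cubic degenerates to a line plus a conic, but the conic class is a line of the second factor and is not a sum of $H$-degree-one classes; here $\rho_X=2$, so Theorem (\ref{main1}) holds, but your claimed decomposition fails. This is exactly the content of Proposition (\ref{connred}): when there is a reducible cycle through two general points and $X$ is not covered by $V$-triplets of lines, $(X,H)$ is this product, and the theorem is checked directly for it rather than via a three-line decomposition. So the correct architecture is: bend-and-break only reduces you to (i) covered by $V$-triplets of lines, handled by Propositions (\ref{3l}) and (\ref{3fam}); (ii) the product $(\proj^t\times\proj^{n-t},\Ol(1,2))$; or (iii) $V$ generically unsplit, where a separate argument must give $\rho_X\le2$. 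Your proposal collapses (ii) and (iii) into (i), and neither collapse can be justified.
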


In the case of maximal Picard number we also prove a structure theorem, which shows that
RCC-manifolds covered by lines  always have an adjunction theoretic scroll structure over
a smooth variety:

\begin{theorem}\label{main2}
Let $(X,H)$ be RCC with respect to a family $V$, assume that $(X,H)$
is covered by lines and that $\rho_X = 3$. Then there is a covering family of lines
whose numerical class spans an extremal ray of $\cone(X)$ and the associated extremal contraction $\f:X \to Y$ is an adjunction scroll over a smooth variety $Y$.
\end{theorem}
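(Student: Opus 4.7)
My plan is built on Theorem~\ref{main1}. It gives me three covering families of lines $\li^1,\li^2,\li^3$ with $[V]=[\li^1]+[\li^2]+[\li^3]$ and such that $X$ is rc$(\li^1,\li^2,\li^3)$-connected. Because $H$ is ample and each family has $H$-degree one, each $\li^i$ is unsplit: any possible degeneration would require a component of $H$-degree zero, which is impossible for an ample polarization. The rc-connectedness together with $\rho_X=3$ forces $[\li^1],[\li^2],[\li^3]$ to be a basis of $N_1(X)_{\mathbb{R}}$.

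The heart of the proof is to exhibit a covering family of lines whose numerical class spans an extremal ray of $\cone(X)$. My approach would be, for each $i$, to consider the rc-quotient $\pi_i:X\to Z_i$ for $\li^i$-chain equivalence; this is a morphism by Campana-Koll\'ar, since $\li^i$ is an unsplit covering family. The variety $Z_i$ remains rationally connected via the pushforwards of the other two line families, so arguing as for the two-family (conic-connected) case in the spirit of Ionescu-Russo one obtains $\rho_{Z_i}\leq 2$; hence $\rho_{X/Z_i}\geq 1$. For at least one index $i$ the equality $\rho_{X/Z_i}=1$ must hold --- since $[\li^1],[\li^2],[\li^3]$ form a basis, the three contractions cannot all collapse strictly more than a single ray --- and then $\pi_i$ is the Mori contraction of the extremal ray $\mathbb{R}_{\geq 0}[\li^i]$, providing the required covering family.

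Let $\varphi:X\to Y$ be the extremal contraction just produced and $F$ its general fiber. As $\varphi$ contracts a covering family of lines, it is of fiber type, and $F$ is rationally chain-connected by lines of $H|_F$-degree one; by the Cho-Miyaoka-Shepherd-Barron/Kebekus characterization of projective space, $(F,H|_F)\cong(\mathbb{P}^k,\mathcal{O}(1))$ with $k=n-\dim Y$. Consequently $-K_X\cdot\li=k+1$, so $K_X+(k+1)H$ is $\varphi$-trivial, and the cone and contraction theorem gives $K_X+(k+1)H=\varphi^*L$ for a line bundle $L$ on $Y$.

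To upgrade this into a genuine adjunction scroll over a smooth $Y$, I would use the remaining two covering families of lines. Chains built from these families connect general points of $X$; a fiber of $\varphi$ of dimension greater than $k$ would violate the balance of dimensions imposed by rc$(\li^1,\li^2,\li^3)$-connectedness, and so $\varphi$ is equidimensional. Together with the general fiber being $(\mathbb{P}^k,\mathcal{O}(1))$, the classical adjunction-theoretic structure theorem of Fujita and Beltrametti-Sommese (in the form of Andreatta-Wi\'sniewski) then implies that $Y$ is smooth, $L$ is ample, and $\varphi$ realizes the desired adjunction scroll structure. The main obstacle I foresee is the extremality step: a priori each $[\li^i]$ could lie in the relative interior of a $2$-face of $\cone(X)$, and ruling this out requires the combined use of the three unsplit covering families, the anticanonical-degree estimate $-K_X\cdot V\geq n+1$ (available because $V$ connects two general points), and careful tracking of the three rc-quotient morphisms.
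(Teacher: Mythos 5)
Your proposal has several genuine gaps, and the most serious one is at the very start: Theorem~\ref{main1} does \emph{not} give three \emph{covering} families of lines. By Proposition~\ref{3fam} only $\li^1$ is guaranteed to be covering; $\li^2$ is merely horizontal and dominating for the rc$(\li^1)$-fibration and $\li^3$ for the rc$(\li^1,\li^2)$-fibration, and the examples in Section~\ref{exa} show that the cases with only one or two covering families really occur. The paper's proof is forced to split into three cases according to how many $\li^i$ are covering, and the non-covering cases (where $\loc(\li^3)$ is a divisor, etc.) require separate arguments (Lemmas~\ref{face} and \ref{face2}, nefness of divisors trivial on two of the families). Moreover, your extremality step --- the crux of the theorem, as you yourself note --- is not actually carried out: the rc$(\li^i)$-quotient is only an almost holomorphic map (its indeterminacy locus is controlled by \cite[Proposition 1]{BCD}), not a morphism, so ``$\rho_{X/Z_i}=1$ for some $i$'' neither makes sense as stated nor follows from $[\li^1],[\li^2],[\li^3]$ being a basis; the paper instead proves extremality by showing that a class lying on two distinct extremal faces spans a ray, using the dimension counts of Lemmas~\ref{locy}, \ref{numcon}, \ref{face}, \ref{face2}.

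The second half of your argument also fails on two concrete points. First, a general fiber $F$ of the contraction of $\mathbb R_+[\li^1]$ is only rationally \emph{chain} connected by lines of that family; this does not characterize $(\proj^k,\Ol(1))$ (a quadric is chain connected by lines), so the CMSB/Kebekus step and the conclusion $-K_X\cdot\li^1=\dim F+1$ are unjustified --- the paper obtains the supporting divisor $K_X+(\dim F+1)H$ only after proving that the general fiber has dimension exactly $-K_X\cdot\li^1-1$, or else switches to a different ray whose contraction is a projective bundle via \cite[Lemma 2.12]{Fuj4}. Second, your equidimensionality claim is simply false in this setting: Example~\ref{?} exhibits RCC-manifolds with $\rho_X=3$ whose scroll contractions have jumping fibers, which is precisely why the theorem asserts an \emph{adjunction} scroll rather than a classical scroll. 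The paper handles this by showing every fiber has dimension at most $-K_X\cdot\li^1$ (so that the contraction is the projectivization of a B\v anic\v a sheaf, giving smoothness of $Y$ by \cite[Proposition 2.5]{BW}), or by producing an equidimensional contraction of another ray; smoothness of $Y$ cannot be obtained from an equidimensionality statement that does not hold.
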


For conic-connected manifolds a stronger result holds, namely conic-connected
manifolds with maximal Picard number have a classical scroll structure; as Example (\ref{?})
shows this is not true for RCC-manifolds, i.e. there are RCC-manifolds with a scroll structure
which has jumping fibers.\par
\medskip
As for the question if a RCC-manifold covered by lines is a Fano manifold, we are not able to provide an answer. The big difference with the conic-connected case, which makes the problem definitely harder,
is that the structure of the cone of curves of the manifold, which now lives in a three-dimensional vector space is not known: a priori, many different shapes and an unknown number of extremal rays are possible.

\section{Background material}

\subsection{Families of rational curves and of rational $1$-cycles}\par
\medskip

\begin{definition} \label{Rf}
A {\em family of rational curves} $V$ on $X$ is an irreducible component
of the scheme $\ratcurves$ (see \cite[Definition II.2.11]{Kob}).\\
Given a rational curve we will call a {\em family of
deformations} of that curve any irreducible component of  $\ratcurves$
containing the point parametrizing that curve.\\
We define $\loc(V)$ to be the set of points of $X$ through which there is a curve among those
parametrized by $V$; we say that $V$ is a {\em covering family} if ${\loc(V)}=X$ and that $V$ is a
{\em dominating family} if $\overline{\loc(V)}=X$.\\
By abuse of notation, given a line bundle $L \in \pic(X)$, we will denote by $L \cdot V$
the intersection number $L \cdot C$, with $C$ any curve among those
parametrized by $V$.\\
We will say that $V$ is {\em unsplit} if it is proper; clearly, an unsplit dominating family is
covering.\\
We denote by $V_x$ the subscheme of $V$ parametrizing rational curves
passing through a point $x$ and by $\loc(V_x)$ the set of points of $X$
through which there is a curve among those parametrized by $V_x$. If, for a general point $x \in \loc(V)$,
$V_x$ is proper, then we will say that the family is {\em locally unsplit}.
Moreover, we say that $V$ is {\em generically unsplit} if the fiber of the double-evaluation map
$$\begin{array}{rllc}
\Pi:&V&\rightarrow& X\times X\\
&[f]&\mapsto&(f(q),f(p))
\end{array}$$
over the general point of its image has dimension at most $0$.\end{definition}

\begin{definition}
Let $U$ be an open dense subset of $X$ and $\pi\colon U \to Z$ a proper
surjective morphism to a quasi-projective variety;
we say that a family of rational curves $V$ is a {\em horizontal dominating family with respect to} $\pi$
if $\loc(V)$ dominates $Z$ and curves parametrized by $V$ are not contracted by $\pi$.
\end{definition}

\begin{definition}\label{CF}
We define a {\em Chow family of rational 1-cycles} $\W$ to be an irreducible
component of  $\textrm{Chow}(X)$ parametrizing rational and connected 1-cycles.
We define $\loc(\W)$ to be the set of points of $X$ through which there is a cycle among those
parametrized by $\W$; notice that $\loc(\W)$ is a closed subset of $X$ (\cite[Definition II.2.3]{Kob}).
We say that $\W$ is a {\em covering family} if $\loc(\W)=X$.\\
If $V$ is a family of rational curves, the closure of the image of
$V$ in $\textrm{Chow}(X)$, denoted by $\V$, is called the {\em Chow family associated to} $V$.
If $V$ is proper,  i.e. if the family is unsplit, then $V$ corresponds to the normalization
of the associated Chow family $\V$.
\end{definition}

\begin{definition}
Let $V$ be a family of rational curves and let $\V$ be the associated Chow family. We say that
$V$ (and also $\V$) is {\em quasi-unsplit} if every component of any reducible cycle parametrized by 
$\V$ has numerical class proportional to the numerical class of a curve parametrized by $V$.
\end{definition}

\begin{definition}
Let $V^1, \dots, V^k$ be families of rational curves on $X$ and $Y \subset X$.
We define $\loc(V^1)_Y$ to be the set of points $x \in X$ such that there exists
a curve $C$ among those parametrized by $V^1$ with
$C \cap Y \not = \emptyset$ and $x \in C$. We inductively define
$\loc(V^1, \dots, V^k)_Y := \loc(V^k)_{\loc(V^1, \dots,V^{k-1})_Y}$.\\
Notice that, by this definition, we have $\loc(V)_x=\loc(V_x)$.
Analogously we define $\loc(\W^1, \dots, \W^k)_Y$  for Chow families $\W^1, \dots, \W^k$ of rational 1-cycles.
\end{definition}

{\bf Notation}: 
If $\Gamma$ is a $1$-cycle, then we will denote by $[\Gamma]$ its numerical equivalence class
in $\cycl(X)$; if $V$ is a family of rational curves, we will denote by $[V]$ the numerical equivalence class
of any curve among those parametrized by $V$.
A proper family will always be denoted by a calligraphic letter.\\
If $Y \subset X$, we will denote by $\cycx{Y} \subseteq \cycl(X)$ the vector subspace
generated by numerical classes of curves of $X$ contained in $Y$; moreover, we will denote 
by  $\conx{Y} \subseteq \cone(X)$
the subcone generated by numerical classes of curves of $X$ contained in~$Y$. We will
denote by $\langle \dots \rangle$ the linear span.

\begin{definition} We say that $k$ quasi-unsplit families $V^1, \dots, V^k$ are numerically independent
if in $\cycl(X)$ we have $\dim \langle [V^1], \dots, [V^k]\rangle =k$.
\end{definition}

For special families of rational curves  we have useful dimensional estimates. The basic one is the following:

\begin{proposition}{\rm(\cite[Corollary IV.2.6]{Kob}\label{iowifam})}
Let $V$ be a family of rational curves on $X$ and $x \in \loc(V)$ a point such that every component of $V_x$ is proper.
Then
  \begin{itemize}
       \item[(a)] $\dim \loc(V)+\dim \loc(V_x) \ge \dim X  -K_X \cdot V -1$;
       \item[(b)] every irreducible component of $\loc(V_x)$ has dimension $\ge -K_X \cdot V -1$.
    \end{itemize}
\end{proposition}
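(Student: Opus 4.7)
The plan is to combine Riemann-Roch estimates for the Hom scheme with a fiber-dimension analysis of the evaluation map from the universal family. For any morphism $f\colon \mathbb{P}^1 \to X$ parametrizing a curve of $V$, deformation theory together with Riemann-Roch on $\mathbb{P}^1$ gives $\dim_{[f]} \om(\mathbb{P}^1, X) \ge \chi(f^*T_X) = -K_X\cdot V + \dim X$. Imposing the pointed condition $f(0) = x$ drops this estimate by at most $\dim X$, and quotienting by $\Aut(\mathbb{P}^1)$ (resp.\ $\Aut(\mathbb{P}^1;0)$, of dimensions $3$ and $2$) yields
\[
\dim V \ge -K_X \cdot V + \dim X - 3, \qquad \dim V_x \ge -K_X \cdot V - 2.
\]

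For part (b), consider the universal family $\mathcal{U}_x \to V_x$, whose total space has dimension $\dim V_x + 1$. Since $V_x$ is proper by hypothesis, the evaluation morphism $e\colon \mathcal{U}_x \to X$ has closed image $\loc(V_x)$. Its fiber over a point $y \neq x$ is finite over the scheme $V_{x,y}$ of curves in $V$ passing through both $x$ and $y$, because curves of $V$ are non-constant and each has only finitely many preimages of $y$. The crux is that $\dim V_{x,y} = 0$ for general $y \in \loc(V_x)$: otherwise a positive-dimensional subfamily of curves through both fixed points would be proper inside $V_x$, but bend-and-break at $x$ and $y$ forces the limit cycle to split into distinct irreducible components, contradicting the fact that $V$ parametrizes only irreducible rational curves. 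Therefore $\dim \loc(V_x) = \dim V_x + 1 \ge -K_X \cdot V - 1$, which is (b).

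For part (a), apply the analogous analysis to the universal family $\mathcal{U} \to V$ with its evaluation to $X$. Since curves of $V$ are non-constant, the fiber over a general $y \in \loc(V)$ has dimension $\dim V_y$, giving $\dim V + 1 = \dim \loc(V) + \dim V_y$. By upper semi-continuity of fiber dimension one has $\dim V_x \ge \dim V_y$, hence
\[
\dim \loc(V) + \dim V_x \ge \dim V + 1 \ge -K_X \cdot V + \dim X - 2.
\]
Substituting the identity $\dim V_x = \dim \loc(V_x) - 1$ from part (b) produces the desired inequality $\dim \loc(V) + \dim \loc(V_x) \ge \dim X - K_X\cdot V - 1$. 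The main obstacle is the generic-finiteness step in (b): one must combine the properness of $V_x$ with bend-and-break to rule out positive-dimensional families of curves through two general points, while carefully distinguishing between $V$ (which parametrizes irreducible curves) and its closure in the Chow variety (where limits may be reducible).
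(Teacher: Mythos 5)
Your statement is not proved in the paper at all---it is quoted from \cite[Corollary IV.2.6]{Kob}---and your argument is essentially the standard proof underlying that citation: the deformation/Riemann--Roch bounds on $\om(\proj^1,X)$ and its pointed version (giving $\dim V\ge -K_X\cdot V+\dim X-3$ and $\dim V_x\ge -K_X\cdot V-2$), combined with properness of $V_x$ and bend-and-break to make the evaluation $\Un_x\to\loc(V_x)$ generically finite, and the fibre-dimension lower bound for $\Un\to\overline{\loc(V)}$ at the special point $x$ (which is what your ``semicontinuity'' step really uses, and which holds without properness of the evaluation). The proof is correct; the only cosmetic points are that bend-and-break produces a reducible \emph{or non-reduced} limit cycle (either way contradicting that the proper subfamily of curves through $x$ and $y$ parametrizes irreducible reduced curves), and that for (b) one should apply the same local estimate and finiteness argument to each irreducible component $W$ of $V_x$, so that every irreducible component $\loc(W)$ of $\loc(V_x)$ receives the bound, as the statement requires.
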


\begin{remark}\label{iowiug}
If $V$ is a generically unsplit dominating family then, for a general $x \in X$ the inequalities in Proposition (\ref{iowifam}) are equalities, by \cite[Proposition II.3.10]{Kob}.
\end{remark}

Proposition (\ref{iowifam}), in case $V$ is the unsplit family of deformations of a minimal extremal
rational curve, gives the {\em fiber locus inequality}:

\begin{proposition} {\rm(\cite{Io,Wicon})}\label{fiberlocus} Let $\f$ be a Fano-Mori contraction
of $X$ and let $E = \Exc(\f)$ be its exceptional locus;
let $G$ be an irreducible component of a (non trivial) fiber of $\f$. Then
$$\dim E + \dim G \geq \dim X + l -1,$$
where $l =  \min \{ -K_X \cdot C\ |\  C \textrm{~is a rational curve in~} G\}.$
If $\f$ is the contraction of a ray $R$, then $l(R):=l$ is called the {\em length of the ray}.
\end{proposition}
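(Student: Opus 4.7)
The plan is to deduce the inequality from the dimensional estimate of Proposition~\ref{iowifam}, applied to the family of deformations of a minimal rational curve in $G$.

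First I pick a rational curve $C_0 \subset G$ realizing the minimum, so that $-K_X\cdot C_0=l$, and let $V$ be the irreducible component of $\ratcurves$ containing $[C_0]$. Since $C_0$ is contracted by $\f$, its numerical class lies in the subcone of $\cone(X)$ generated by curves contracted by $\f$; consequently every curve parametrized by $V$, being numerically equivalent to $C_0$, is also contracted by $\f$. In particular $\loc(V)\subseteq E$.

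Next I choose a point $x\in C_0$ that lies in no irreducible component of the fiber $F:=\f^{-1}(\f(x))$ other than $G$. Such a point exists provided $C_0$ is not contained in $\bigcup_{G'\neq G}(G\cap G')$, which can be arranged by a generic choice of minimal curve, since otherwise all minimal rational curves in $G$ would be trapped in a proper closed subset of $G$. I then verify that $V_x$ is proper: any reducible degeneration of a curve in $V_x$ in the associated Chow family would give a connected $1$-cycle $\sum C_i$ through $x$ with $\sum(-K_X\cdot C_i)=l$. Each $C_i$ is contracted by $\f$, so by connectedness the cycle lies in $F$; the component $C_i$ passing through $x$ then lies in the unique irreducible component of $F$ containing $x$, namely $G$, is rational, and has $-K_X\cdot C_i<l$, contradicting the minimality of $l$ on $G$. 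The same reasoning shows $\loc(V_x)\subseteq G$, because any irreducible curve in $V$ through $x$ lies in $F$ and, by the choice of $x$, in $G$.

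Finally, Proposition~\ref{iowifam}(a) gives
\[\dim\loc(V)+\dim\loc(V_x) \;\geq\; \dim X-K_X\cdot V-1 \;=\; \dim X+l-1,\]
and combining this with $\loc(V)\subseteq E$ and $\loc(V_x)\subseteq G$ yields the desired inequality. The main obstacle is the middle step: the minimality of $l$ is only asserted over curves contained in $G$, so one must carefully arrange that any component of a degenerate cycle through $x$ lies in $G$; this is exactly where the choice of $x$ avoiding the other components of the fiber $F$ is essential.
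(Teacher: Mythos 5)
Your overall strategy --- apply Proposition (\ref{iowifam}) to the family of deformations of a contracted rational curve, using minimality of the degree to force unsplitness at a point of $G$ --- is exactly the mechanism the paper alludes to just before the statement (the result itself is only cited from \cite{Io,Wicon}). The gap is in your choice of $x$. You need a degree-$l$ rational curve $C_0\subset G$ not contained in $\bigcup_{G'\neq G}G'$, and your justification (``otherwise all minimal rational curves in $G$ would be trapped in a proper closed subset of $G$'') contradicts nothing: there is no statement available guaranteeing that the rational curves of minimal degree in $G$ move, let alone sweep out a dense subset of $G$; they could all be rigid, or all lie inside $G\cap G'$ for another irreducible component $G'$ of the fiber $F$. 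In that situation every point $x\in C_0$ lies on at least two components of $F$, and both halves of your middle step collapse: a degenerate cycle in $\V_x$ has its component through $x$ contained in \emph{some} component of $F$ through $x$, possibly $G'$, where the minimality of $l$ (defined only via curves in $G$) gives no contradiction, so properness of $V_x$ is unproven; likewise irreducible deformations of $C_0$ through $x$ may sweep into $G'$, so $\loc(V_x)\subseteq G$ fails and your final estimate would bound $\dim E+\dim G'$ rather than $\dim E+\dim G$. (A smaller point: the limit cycle can also be irreducible but non-reduced, $mC'$ with $m\ge 2$; the same degree count handles it, but it should be said.)

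The standard way to close the gap reverses the order of the choices: first fix $x$ general in $G$, so that $G$ is the \emph{unique} component of $F$ containing $x$ (possible because $G\cap\bigcup_{G'\neq G}G'$ is a proper closed subset of $G$); then invoke the nontrivial fact that every positive-dimensional fiber of a Fano--Mori contraction is covered by rational curves contracted by $\f$ (this is where the real input of \cite{Io,Wicon}, or Kawamata's theorem on lengths of extremal rays, enters --- it does not follow from your minimality argument), so that there is a contracted rational curve through $x$, automatically contained in $G$ by the choice of $x$; finally take such a curve $C_0$ of minimal anticanonical degree $d$ \emph{among contracted rational curves through $x$}. Your breaking argument then works verbatim: any degeneration or multiple limit has its component through $x$ inside $G$ with degree strictly less than $d$, contradicting minimality through $x$; hence $V_x$ is proper and $\loc(V_x)\subseteq G$, while $\loc(V)\subseteq E$ as you argued, and Proposition (\ref{iowifam}) gives $\dim E+\dim G\ \ge\ \dim X+d-1\ \ge\ \dim X+l-1$, since $d\ge l$ by the definition of $l$.
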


The following generalization of Proposition (\ref{iowifam}) will be often used:

\begin{lemma} {\rm(Cf. \cite[Lemma 5.4]{ACO})} \label{locy}
Let $Y \subset X$ be an irreducible closed subset and  $V^1, \dots, V^k$ numerically independent
unsplit families of rational curves such that\linebreak $\langle [V^1], \dots, [V^k]\rangle \cap \conx{Y}={\underline 0}$.
Then either $\loc(V^1, \ldots,V^k)_Y=\emptyset$ or
      $$\dim \loc(V^1, \ldots, V^k)_Y \ge \dim Y +\sum -K_X  \cdot V^i -k.$$
\end{lemma}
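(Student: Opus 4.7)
I would proceed by induction on $k$.

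For the base case $k=1$, suppose $\loc(V^1)_Y \neq \emptyset$. Since $V^1$ is unsplit, the two structure maps of the universal family, $p\colon \textrm{Univ}(V^1) \to V^1$ and $ev\colon \textrm{Univ}(V^1) \to X$, are proper. I would form the incidence variety $I := \{(C,y) \in V^1 \times Y : y \in C\}$, whose projection to $Y$ surjects onto $Y \cap \loc(V^1)$ with fibers isomorphic to the proper schemes $V^1_y$ of dimension at least $-K_X \cdot V^1 -2$ (the standard estimate from Proposition~\ref{iowifam} together with unsplitness), and whose projection to $V^1$ lands in the closed locus $T$ of curves meeting $Y$. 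The transversality hypothesis $\langle [V^1]\rangle \cap \conx{Y} = \{\underline 0\}$ forces no curve of $V^1$ to be contained in $Y$, since otherwise $[V^1] \in \conx{Y}$ would give $[V^1]=0$, impossible for a rational curve. Consequently the evaluation $ev$ restricted to the $\mathbb P^1$-bundle $p^{-1}(T)$ is generically finite onto $\loc(V^1)_Y$, and combining the dimension estimate for $T$ with the extra dimension coming from the $\mathbb P^1$-fiber yields
$$\dim \loc(V^1)_Y \;\ge\; \dim Y + (-K_X\cdot V^1) - 1,$$
as required.

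For the inductive step, set $Y' := \loc(V^1,\ldots,V^{k-1})_Y$. If $\loc(V^1,\ldots,V^k)_Y=\loc(V^k)_{Y'}$ is non-empty, so is $Y'$, and by induction $\dim Y' \ge \dim Y + \sum_{i<k}(-K_X \cdot V^i) - (k-1)$. I would then apply the base case to $V^k$ and $Y'$; the only point to check is the transversality condition $\langle [V^k]\rangle \cap \conx{Y'} = \{\underline 0\}$. For this I would show the containment
$$\conx{Y'} \;\subseteq\; \conx{Y} + \sum_{i<k}\mathbb R_{\ge 0}[V^i],$$
exploiting the inductive construction of $Y'$ as the set swept out by attaching chains of curves from $V^1,\ldots,V^{k-1}$ to $Y$. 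A nonzero element of $\langle [V^k]\rangle \cap \conx{Y'}$ would then give a nontrivial element of $\langle [V^1],\ldots,[V^k]\rangle \cap \conx{Y}$, against the hypothesis, thanks to the numerical independence of the $V^i$'s. The base case now delivers
$$\dim \loc(V^k)_{Y'} \;\ge\; \dim Y' + (-K_X \cdot V^k) - 1,$$
and adding these inequalities closes the induction.

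The main obstacle is the inclusion $\conx{Y'} \subseteq \conx{Y} + \sum_{i<k}\mathbb R_{\ge 0}[V^i]$: an irreducible curve lying in $Y'$ need not literally be a curve of one of the families $V^i$ nor a curve of $Y$, so one cannot simply read off its numerical class from the construction. A deformation argument inside $Y'$, degenerating such a curve to a reducible cycle whose components are either in $Y$ or are deformations of the attached unsplit curves, is what makes the containment hold; once this is in place, propagating the transversality hypothesis from $Y$ to $Y'$ at each step is straightforward, and the dimension count in the base case is a routine tracking through the proper maps $p$ and $ev$.
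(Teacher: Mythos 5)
The paper itself gives no proof of this lemma (it is quoted from \cite{ACO}, Lemma 5.4), so your proposal has to stand against the standard argument, and as written it has genuine gaps, both concentrated where the numerical hypothesis actually does its work. In the base case, the step ``no curve of $V^1$ lies in $Y$, consequently $ev$ restricted to $p^{-1}(T)$ is generically finite onto $\loc(V^1)_Y$'' is not a valid implication: for $X=\proj^3$, $V^1$ the family of lines and $Y$ a smooth conic, no line lies in $Y$, yet $T$ is $3$-dimensional and $ev$ maps the $4$-fold $p^{-1}(T)$ onto $\loc(V^1)_Y=\proj^3$ with $1$-dimensional fibres (of course the cone hypothesis fails there, which is precisely the point: generic finiteness needs the full hypothesis, not just its corollary that $V^1$-curves are not contained in $Y$). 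The correct way to exclude a positive-dimensional family of $V^1$-curves through a general $w$ all meeting $Y$ is numerical: such a family would either cut out a curve $B\subset Y\cap\loc(V^1)_w$, whose class is a positive multiple of $[V^1]$ by Lemma (\ref{numeq}) applied to the point $w$, contradicting $\langle[V^1]\rangle\cap\conx{Y}=\underline 0$, or force infinitely many curves of $V^1$ through two fixed points, contradicting unsplitness via bend and break. Moreover, even granting generic finiteness, your count proves $\dim\loc(V^1)_Y\ge\dim\bigl(Y\cap\loc(V^1)\bigr)+(-K_X\cdot V^1)-1$, and you silently replace $\dim\bigl(Y\cap\loc(V^1)\bigr)$ by $\dim Y$; these differ whenever $\loc(V^1)\subsetneq X$. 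The repair is to use Proposition (\ref{iowifam})(a), valid at \emph{every} point of $\loc(V^1)$ since $V^1$ is unsplit, which boosts the fibre estimate to $\dim\loc(V^1)_y\ge -K_X\cdot V^1-1+\operatorname{codim}\loc(V^1)$, together with $\dim\bigl(Y\cap\loc(V^1)\bigr)\ge\dim Y-\operatorname{codim}\loc(V^1)$ in the smooth ambient $X$ (note $\loc(V^1)$ is irreducible); the codimension terms cancel and give the stated bound.

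In the inductive step, the containment you yourself flag as the main obstacle is left unproved and is in fact stronger than what is available: Mori's argument, i.e. Lemma (\ref{numcon}), only controls the sign of the coefficient of the base curve, so every curve in $Y'$ is numerically $\lambda C_Y+\sum_{i<k}\mu_i[V^i]$ with $\lambda\ge 0$ but with the $\mu_i$ of arbitrary sign; you should not expect $\conx{Y'}\subseteq\conx{Y}+\sum_{i<k}\mathbb R_{\ge0}[V^i]$, and no ``deformation inside $Y'$'' is needed. The weaker statement suffices to propagate the hypothesis: if $c\sa[V^k]$ with $c>0$ were of that form, then either $\lambda>0$ and $[C_Y]$ is a nonzero element of $\langle[V^1],\dots,[V^k]\rangle\cap\conx{Y}$ (nonzero by numerical independence of the $[V^i]$), or $\lambda=0$ and numerical independence is contradicted directly. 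So the missing ingredient in both halves of your proof is exactly the pair of numerical lemmas (\ref{numeq}) and (\ref{numcon}); finally, note that $Y'$ need not be irreducible, so the one-family step must be applied to a suitable irreducible component, a point that also needs care.
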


A key fact underlying our strategy to obtain bounds on the Picard number, based on \cite[Proposition II.4.19]{Kob},
is the following:

\begin{lemma}\label{numeq}{\rm(\cite[Lemma 4.1]{ACO})}
Let $Y \subset X$ be a closed subset, $\V$ a Chow fami\-ly of rational $1$-cycles. Then every curve
contained in $\loc(\V)_Y$ is numerically equivalent to a linear combination with rational
coefficients of a curve contained in $Y$ and of irreducible components of cycles parametrized
by $\V$ which meet $Y$.
\end{lemma}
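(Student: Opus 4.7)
The plan is to invoke \cite[Proposition II.4.19]{Kob} — the standard result that $1$-cycles parametrized by a connected proper base are numerically equivalent — and apply it to the subfamily of $\V$ cut out by meeting $Y$. Write $N \subset \cycl(X)_{\mathbb Q}$ for the subspace generated by classes of curves in $Y$ and by classes of irreducible components of cycles in $\V$ meeting $Y$; we want $[C] \in N$.

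First I would set up the relative geometry. Since $Y$ is closed and the universal family over $\V$ is proper, the locus $\V_Y\subset \V$ of cycles meeting $Y$ is closed. Let $p\colon \mathcal{U}_Y \to \V_Y$ be the universal family and $q\colon \mathcal{U}_Y \to X$ the evaluation; properness of $p$ gives $q(\mathcal{U}_Y) = \loc(\V)_Y$. If $C \subset Y$ there is nothing to prove, so I assume $C\not\subset Y$; then $C\subset q(\mathcal{U}_Y)$ and I can pick an irreducible component $T \subset q^{-1}(C)$ dominating $C$, and let $B\subset \V_Y$ be the closure of $p(T)$.

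If $\dim B = 0$, then $T$ lies in the single fiber $p^{-1}(t_0)$, which is the $\V$-cycle $Z_{t_0}$ meeting $Y$; hence $C = q(T)$ is contained in the support of $Z_{t_0}$, and being irreducible it must coincide with one of its components, so $[C]\in N$. When $\dim B \ge 1$ I would cut $B$ down by general hyperplane sections to a curve $B'$ such that $S := p^{-1}(B')$ is (after normalization) a surface still carrying a curve $T' \subset T$ with $q(T') = C$. Then $S\to B'$ is a family of $1$-cycles whose general fiber is a component of a $\V$-cycle, so by \cite[Proposition II.4.19]{Kob} applied to this connected family all fibers push forward to the same class $v\in N$; and $q(S)$ is a surface containing $C$ and meeting $Y$ (every fiber does, because $B'\subset \V_Y$).

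The last step, and where the main care is needed, is to split $[C]$ on the ruled surface $q(S)$ into a ``vertical'' part — a multiple of the fiber class $v\in N$ — and a ``horizontal'' part that can be realized by an actual curve contained in $Y$. The clean way is to apply \cite[Proposition II.4.19]{Kob} again, this time to the algebraic family of $1$-cycles obtained by sliding the image $q_\ast T'$ in $q(S)$ along the fibration: one limit is $C$ itself, another limit is a reducible cycle of the form $aF + \Gamma$ where $F$ is a $\V_Y$-fiber and $\Gamma$ is the image of a multisection specialized into $q(S)\cap Y$. Since both $[F]$ and $[\Gamma]$ lie in $N$, this yields $[C]\in N$. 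The only real obstacle is producing this degeneration — one must choose $B'$ so that the locus of contact with $Y$ traces out a genuine $1$-cycle inside $Y$, which can always be arranged by adjusting the hyperplane section.
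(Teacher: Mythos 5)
Your overall scaffolding is the intended one: the paper itself does not reprove this lemma (it is quoted from \cite[Lemma 4.1]{ACO}, whose argument is the one ``based on \cite[Proposition II.4.19]{Kob}'' alluded to in the text), and restricting to the closed subfamily of cycles meeting $Y$, reducing to a one-dimensional base $B'$, sweeping out a surface and applying \cite[Proposition II.4.19]{Kob} to see that all the fibre cycles over $B'$ have one and the same class in $N$ is exactly how that proof begins. The handling of the cases $\dim B=0$ and $C\subset Y$ is also fine.

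The last step, however, is a genuine gap. You propose to conclude by ``sliding'' $q_*T'$ along the fibration and applying \cite[Proposition II.4.19]{Kob} to a connected family of $1$-cycles having $C$ itself as one member and a cycle $aF+\Gamma$ with $\Gamma\subset Y$ as another. Such a family does not exist in general: a multisection of the swept surface need not move at all (it may be rigid in $X$, hence in $q(S)$), and even when it moves there is no reason whatsoever that some limit should acquire a component lying inside $Y$. More fundamentally, the relation one is after is only numerical and typically has the shape $d_Y[C]\equiv d_C[\Gamma]+\sum_i a_i[F_i]$, where $\Gamma$ is a multisection contained in $q^{-1}(Y)$ (it exists because every cycle parametrized by $B'$ meets the closed set $Y$), the $F_i$ are components of fibres, and the coefficients are rational and possibly negative; a relation with negative coefficients cannot be produced by a specialization inside a connected family of \emph{effective} cycles one of whose members is $C$. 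The correct conclusion compares the two multisections --- the one dominating $C$ and the one inside $q^{-1}(Y)$ --- using the structure of a surface fibred over a curve with rational fibres (after normalization and base change its $\cycl$ is spanned over $\mathbb{Q}$ by fibre components together with any single multisection), and then pushes this relation forward to $X$; ``adjusting the hyperplane section'' does not create the degeneration you need. A secondary, fixable point: cutting $B$ by general hyperplane sections need not leave a curve over $B'$ still dominating $C$ (if the fibres of $T\to C$ are contracted by $p$, a general hyperplane section of $B$ misses their images); one should instead cut $T$ by general hyperplanes and take $B'$ to be the image of the resulting curve, or argue separately in that degenerate case.
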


The following Corollary encompasses the most frequent usages of Lemma (\ref{numeq}) in the paper:

\begin{corollary}\label{numcor} Let $V^1$ be a locally unsplit family of rational curves,
and $V^2, \dots, V^k$ unsplit families of rational curves. Then, for a general $x \in \loc(V^1)$,
\begin{enumerate}
\item[(a)] $\cycx{\loc(V^1)_x} = \langle [V^1] \rangle$;
\item[(b)]  $\loc(V^1, \dots, V^k)_x= \emptyset$ or 
$\cycx{\loc(V^1, \dots, V^k)_x} = \langle [V^1], \dots, [V^k] \rangle$.
\end{enumerate}
\end{corollary}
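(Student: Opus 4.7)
The plan is to deduce the corollary directly from Lemma \ref{numeq}, with the (locally) unsplit hypotheses playing the sole role of forcing every relevant cycle of the associated Chow families to be irreducible of the expected numerical class. The argument is by induction on $k$, the case $k=1$ being part (a).

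For part (a), I would pick a general $x \in \loc(V^1)$ so that $V^1_x$ is proper, and let $\V^1$ denote the Chow family associated to $V^1$. Properness of $V^1_x$ ensures that every $1$-cycle parametrized by $\V^1$ and passing through $x$ is actually an irreducible curve of $V^1_x$, hence has class $[V^1]$. Applying Lemma \ref{numeq} with $Y=\{x\}$ and $\W=\V^1$, any curve $C\subseteq \loc(V^1)_x$ is numerically a rational combination of a curve contained in $\{x\}$ (which has trivial class) and of components of cycles of $\V^1$ meeting $\{x\}$ (each of class $[V^1]$), so $[C]\in\langle[V^1]\rangle$; the reverse inclusion is immediate since $\loc(V^1)_x$ contains a curve of $V^1$.

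For the inductive step in part (b), set $Y_{k-1}:=\loc(V^1,\dots,V^{k-1})_x$ and suppose $\loc(V^1,\dots,V^k)_x=\loc(V^k)_{Y_{k-1}}$ is non-empty, so $Y_{k-1}$ is non-empty too. The induction hypothesis gives $\cycx{Y_{k-1}}=\langle[V^1],\dots,[V^{k-1}]\rangle$. Since $V^k$ is unsplit, every cycle of $\V^k$ is an irreducible curve of class $[V^k]$, so Lemma \ref{numeq} applied to $Y_{k-1}$ and $\W=\V^k$ says that any curve $C\subseteq\loc(V^1,\dots,V^k)_x$ is numerically a combination of a curve contained in $Y_{k-1}$ (class in $\langle[V^1],\dots,[V^{k-1}]\rangle$) and components of cycles of $\V^k$ meeting $Y_{k-1}$ (class $[V^k]$), giving $[C]\in\langle[V^1],\dots,[V^k]\rangle$; the reverse containment follows from the chain of curves realizing the non-emptiness of $\loc(V^1,\dots,V^k)_x$.

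The only subtle point, and the one on which the whole argument hinges, is verifying that the cycles of $\V^1$ through the general $x$ and the cycles of $\V^k$ meeting $Y_{k-1}$ are genuinely irreducible curves of classes $[V^1]$ and $[V^k]$ respectively: this is precisely what the locally unsplit and unsplit hypotheses encode, and once it is in place the invocations of Lemma \ref{numeq} conclude the proof routinely.
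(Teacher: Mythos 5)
Your overall strategy, deducing both parts from Lemma (\ref{numeq}) by induction on $k$, with the unsplit hypotheses on $V^2,\dots,V^k$ guaranteeing that every cycle of $\V^i$ is an irreducible curve of class $[V^i]$, is the intended one, and the inductive step is essentially fine. The gap is in the base case, precisely at the point you yourself single out as the crux: the assertion that properness of $V^1_x$ ensures that every $1$-cycle parametrized by $\V^1$ and passing through $x$ is an irreducible curve of $V^1_x$. Properness of $V^1_x$ only controls limits of curves of $V^1$ which themselves pass through $x$; it says nothing about cycles of $\V^1$ through $x$ which arise as limits of curves of $V^1$ not passing through $x$, and such cycles may well be reducible. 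This is not a hypothetical worry: it is exactly the situation of this paper, where for an RCC-manifold with $\rho_X>1$ there are reducible cycles of $\V$ through a general point (Section 5) even when $V$ is generically unsplit; local unsplitness of a family does not prevent its associated Chow family from having reducible members through a general point. Consequently, applying Lemma (\ref{numeq}) with $Y=\{x\}$ and $\W=\V^1$ only shows that classes of curves in $\loc(\V^1)_x$ lie in the span of the classes of \emph{all} components of cycles of $\V^1$ through $x$, which may be strictly larger than $\langle [V^1]\rangle$; so part (a) does not follow as written.

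The repair is to work with the proper subfamily $V^1_x$ rather than with the whole Chow family $\V^1$: for $x$ general, $V^1_x$ is proper, hence its image in $\textrm{Chow}(X)$ is a closed family of irreducible rational $1$-cycles, all of class $[V^1]$ and all passing through $x$, whose locus is exactly $\loc(V^1)_x=\loc(V^1_x)$. Applying the argument of Lemma (\ref{numeq}) (that is, \cite[Proposition II.4.19]{Kob}) to this family with $Y=\{x\}$ gives $\cycx{\loc(V^1)_x}\subseteq \langle [V^1]\rangle$, and (a) follows; your inductive step then goes through unchanged, since for the unsplit families $V^i$, $i\ge 2$, the identification of $\V^i$ with a family of irreducible curves of class $[V^i]$ is correct. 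A minor further point: in (b) the reverse inclusion is not quite given by ``the chain realizing non-emptiness'', since the curves of $V^1,\dots,V^{k-1}$ in such a chain need not be contained in $\loc(V^1,\dots,V^k)_x$; that direction also requires exhibiting curves of the appropriate classes inside the locus, not just the final $V^k$-curve.
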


\subsection{Contractions and fibrations}\par
\medskip

\begin{definition}
Let $X$ be a manifold such that $K_X$ is not nef.
By the  Cone Theorem
 the closure of the cone of effective 1-cycles into
the $\mathbb R$-vector space of 1-cycles modulo numerical equivalence,
$\overline {\cone(X)} \subset\cycl(X)$, is polyhedral in the part contained in
the set $\{z\in \cycl(X) :K_X \cdot z<0\}$.
An extremal face  is a face of this polyhedral part, and an extremal face of dimension one
is called an {\em extremal ray}.\\
To an extremal face $\Sigma \subset \cone(X)$ is associated a morphism with connected fibers 
$\f_\Sigma:X \to Z$ onto a normal variety, morphism which contracts the curves 
whose numerical class is in $\Sigma$; $\f_\Sigma$ is called an {\em extremal contraction}
or a {\em Fano-Mori contraction}, while a Cartier divisor $H$ such that 
$H = \f_\Sigma^*A$ for an ample divisor $A$ on $Z$
is called a {\em supporting divisor} of the map $\f_\Sigma$ (or of the face $\Sigma$).
We denote with $Exc(\f_{\Sigma}):=\{x\in X|\dim \f_\Sigma ^{-1}(\f_{\Sigma}(x))>0\}$ the {\em exceptional locus} of $\f_\Sigma$.\\
An extremal contraction associated to an extremal ray is called an 
{\em elementary contraction};
an elementary contraction is said to be of {\em fiber type} if $\dim X>\dim Z$, otherwise the contraction is {\em birational}. Moreover, if the codimention of the exceptional locus of an elementary birational contraction is equal to one, then the contraction is called {\em divisorial}; otherwise it is called {\em small}.
\end{definition}

\begin{definition} An elementary fiber type extremal contraction 
$\f: X \to Z$ is called an {\em adjunction scroll} 
if there exists a $\f$-ample line bundle $H \in \pic(X)$ such that
$K_X+(\dim X-\dim Z+1)H$ 
is a supporting divisor of $\f$.
An elementary fiber type extremal contraction $\f:X \to Z$ onto a smooth
variety $Z$ is called a $\mathbb P${\em-bundle} or a {\em classical scroll}  if there exists a vector bundle $\E$ of rank 
$\dim X-\dim Z+1$  on $Z$ such that 
$X \simeq \proj(\E)$. 
Some special scroll contractions arise from projectivization of
B\v anic\v a sheaves (cfr. \cite{BW}); in parti\-cu\-lar,
if $\f:X \to Z$ is a scroll such that every fiber has dimension $\le \dim X- \dim Z+1$,
then $Z$ is smooth and $X$ is the projectivization of a B\v anic\v a sheaf on $Z$
(cfr. \cite[Proposition 2.5]{BW}).
\end{definition}

If $X$ admits a fiber type extremal contraction, then it is uniruled; for the converse,
we have that a covering family of rational curves determines a rational fibration, defined on an open set of $X$. We recall briefly this construction.

\begin{definition}
Let $Y \subset X$ be a closed subset, and let $\V^1, \dots, \V^k$ Chow families of rational $1$-cycles; 
define $\cloc_m(\V^1, \dots, \V^k)_Y$
to be the set of points $x \in X$ such that there exist cycles $\Gamma_1, \dots, \Gamma_m$
with the following properties:
    \begin{itemize}
       \item $\Gamma_i$ belongs to a family $\V^j$;
       \item $\Gamma_i \cap \Gamma_{i+1} \not = \emptyset$;
       \item $\Gamma_1 \cap Y \not = \emptyset$ and $x \in \Gamma_m$,
    \end{itemize}
 i.e. $\cloc_m(\V^1, \dots, \V^k)_Y$, is the set of points that can be
joined to $Y$ by a connected chain  of at most $m$ cycles belonging to the families $\V^j$.

\end{definition}

Define a relation of {\em rational connectedness with respect to
$\V^1, \dots, \V^k$} on $X$ in the following way: two points $x$ and $y$ of $X$ are in rc$(\V^1, \dots, \V^k)$-relation if
there exists a chain of cycles in $\V^1, \dots, \V^k$ which joins
$x$ and $y$,  i.e. if $y \in \cloc_m(\V^1, \dots, \V^k)_x$ for some $m$.
In particular,  $X$ is {\em rc$(\V^1, \dots, \V^k)$-connected} if for some $m$ we have
$X=\cloc_m(\V^1, \dots, \V^k)_x$.\par
\medskip

The families $\V^1, \dots, \V^k$ define proper prerelations in the sense of \cite [Definition IV.4.6]{Kob};
to the proper prerelation defined by $\V^1, \dots, \V^k$ it is associated a fibration, which we will call the
{\em rc$(\V^1, \dots, \V^k)$-fibration}:

\begin{theorem}{\rm(\cite[IV.4.16]{Kob}, Cf. \cite{Cam81})}\label{rcfib} Let $X$ be a normal and proper variety and $\V$
a proper prerelation; then there exists an open subvariety $X^0 \subset X$ and a proper morphism with
connected fibers $\pi\colon X^0 \to Z^0$ such that
\begin{itemize}
\item the rc$(\V^1, \dots, \V^k)$-relation restricts to an equivalence relation on $X^0$;
\item $\pi^{-1}(z)$ is a rc$(\V^1, \dots, \V^k)$-equivalence class for every $z \in Z^0$;
\item $\forall\, z\in Z^0$ and  $\forall\, x,y \in \pi^{-1}(z)$, $x \in \cloc_m(\V^1, \dots, \V^k)_y$ with
$m \le 2^{\dim X -\dim Z}-1$.
\end{itemize}
\end{theorem}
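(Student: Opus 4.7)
The plan is to construct $\pi\colon X^0 \to Z^0$ as a Chow-theoretic quotient of $X$ by the equivalence relation generated by $\V^1,\dots,\V^k$, following the Campana--Koll\'ar strategy. For each integer $m\ge 1$ consider the set-valued assignment
\[
\alpha_m\colon x\ \longmapsto\ \cloc_m(\V^1,\dots,\V^k)_x \subset X,
\]
which, after some work, will become a morphism from $X$ to a suitable component of $\mathrm{Chow}(X)$.

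The first step is to upgrade $\alpha_m$ to a genuine morphism. I would introduce a universal chain scheme $\ch_m$ parametrizing ordered $m$-tuples $(\Gamma_1,\dots,\Gamma_m)$ of cycles with $\Gamma_i \in \V^{j_i}$ and $\Gamma_i\cap\Gamma_{i+1}\ne\emptyset$, built by successive fibered products of the universal cycle schemes over the $\V^{j_i}$, together with the tautological map $\ch_m \to X$ picking out a base point on $\Gamma_1$. Properness of the prerelation ensures that $\ch_m\to X$ is proper, and pushing forward the universal cycle produces the morphism $\alpha_m\colon X\to\mathrm{Chow}(X)$ (cf.\ \cite[IV.4]{Kob}).

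The second step is a stabilization result on a dense open $X^*\subset X$. Since $\cloc_m(\ldots)_x$ is non-decreasing in $m$ and bounded by $\dim X$, for the general $x$ it stabilizes at some smallest $m_0$, and the stabilized locus is then the full rc-equivalence class through $x$. A doubling argument shows that whenever $\cloc_m(\ldots)_x$ is not yet the full rc-class, the containment $\cloc_m(\ldots)_x \subsetneq \cloc_{2m}(\ldots)_x$ is strict and raises dimension by at least one; starting from dimension at least $1$ and capped by the generic fiber dimension $f=\dim X-\dim Z^0$, this yields $m_0 \le 2^{f}-1$, which is exactly the chain-length bound in the third bullet.

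Finally, on the open set $X^0\subset X^*$ where the cycle $\alpha_{m_0}(x)$ is equidimensional of maximal dimension, $\alpha_{m_0}$ factors through the quotient by rc-equivalence. Taking $Z^0$ to be the normalization of the image of $\alpha_{m_0}|_{X^0}$ produces the desired proper morphism $\pi\colon X^0 \to Z^0$; its fibers are by construction the rc$(\V^1,\dots,\V^k)$-equivalence classes and are connected, which gives the first two bullets. The main obstacle is the first step: turning the set-theoretic chain loci into an algebraic family of cycles over $X$ requires the full proper-prerelation formalism of \cite[IV.4]{Kob}, in particular the flatness of the universal chain scheme over $X$. Once this is in place, the stabilization and the passage from rational map to morphism are comparatively routine semi-continuity and shrinking arguments.
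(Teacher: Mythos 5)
This statement is not proved in the paper at all: it is quoted verbatim from \cite[IV.4.16]{Kob} (cf.\ \cite{Cam81}), so the paper's ``proof'' consists of that citation. Your sketch reproduces in outline precisely the Campana--Koll\'ar quotient construction underlying the cited result --- chain schemes built from fibered products of universal families, a Chow-theoretic quotient map on a dense open set, and the stabilization/doubling argument giving the bound $2^{\dim X-\dim Z}-1$ --- so it is consistent with the source the paper relies on and is correct as a sketch, with the genuinely technical steps appropriately deferred to \cite[IV.4]{Kob}.
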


If $\V$ is a covering Chow family of rational $1$-cycles, associated to a quasi-unsplit dominating family $V$, and $\pi: \rc{X}{Z}$ is the rc$(\V)$-fibration, then by \cite[Proposition 1, (ii)]{BCD} its indeterminacy locus $B$ is the union of all rc$(\V)$-equivalence classes
of dimension greater than $\dim X - \dim Z$.\par
\medskip
Combining Theorem (\ref{rcfib}) with Lemma (\ref{numeq}), we get the following:

\begin{proposition} (Cf. \cite[Corollary 4.4]{ACO})\label{rhobound}
If $X$ is rationally connected with respect to some Chow families of rational $1$-cycles
$\V^1, \dots, \V^k$, then $\cycl(X)$ is gene\-rated by the classes of irreducible components of cycles
in $\V^1, \dots, \V^k$.\\
In particular, if $\V^1, \dots, \V^k$ are quasi-unsplit families, then $\rho_X \le k$ and equality
holds if and only if $\V^1, \dots, \V^k$ are numerically independent.
\end{proposition}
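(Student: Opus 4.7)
The plan is to trace the numerical class of an arbitrary curve in $X$ back along a chain of cycles connecting it to a fixed base point, using Lemma (\ref{numeq}) at each step to rewrite it in terms of classes of irreducible components of cycles from the given families.

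Fix $x_0 \in X$ and, for each $i \ge 0$, set $X_i := \cloc_i(\V^1, \ldots, \V^k)_{x_0}$, so that $X_0 = \{x_0\}$ and, by the rc-connectedness hypothesis, $X = X_m$ for some $m$. I would prove by induction on $i$ that every curve contained in $X_i$ has numerical class in the subspace $N \subseteq \cycl(X)$ spanned by classes of irreducible components of cycles parametrized by $\V^1, \ldots, \V^k$. The base case $i=0$ is trivial, since $X_0$ contains no curves. For the inductive step, $X_i$ is the union of the subsets $\loc(\V^j)_{X_{i-1}}$ for $j = 1, \ldots, k$; any curve $C \subseteq X_i$ lies in some such piece, and Lemma (\ref{numeq}) then expresses $[C]$ as a rational combination of the class of a curve in $X_{i-1}$, which lies in $N$ by the inductive hypothesis, together with classes of irreducible components of cycles from $\V^j$ meeting $X_{i-1}$, which lie in $N$ by definition. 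Taking $i=m$ yields the first assertion, namely that $\cycl(X) = N$.

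For the second part, the quasi-unsplit hypothesis means that every irreducible component of a cycle parametrized by $\V^j$ has class proportional to $[V^j]$, whence $N = \langle [V^1], \ldots, [V^k] \rangle$ and therefore $\rho_X = \dim \cycl(X) \le k$. Equality holds precisely when $[V^1], \ldots, [V^k]$ are linearly independent in $\cycl(X)$, which is the definition of numerical independence; conversely, numerical independence forces these spanning classes to be a basis.

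The only technical point, which is mild, appears in the inductive step: Lemma (\ref{numeq}) requires the auxiliary subset to be closed. This is automatic here, because $\loc(\V^j)_Y$ is closed whenever $Y$ is closed and $\V^j$ is a proper Chow family of rational $1$-cycles, so the inductive closedness of $X_{i-1}$ is preserved throughout the argument and the lemma is legitimately invoked at every stage.
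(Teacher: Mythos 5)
Your argument is correct and matches the paper's route: the paper obtains this proposition (citing \cite[Corollary 4.4]{ACO}) precisely by combining the rc-fibration/chain description with repeated application of Lemma (\ref{numeq}) along chains, which is exactly your induction on the chain length in $\cloc_i(\V^1,\dots,\V^k)_{x_0}$, and the quasi-unsplit/numerical-independence part is handled the same way.
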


\subsection{Extremality of families of rational curves}\par
\medskip
The key observation for proving the extremality of the numerical class of a family of curves is a
variation of an argument of Mori, contained in  \cite[Proof of Lemma 1.4.5]{BSW}.
We state it as follows:

\begin{lemma} \label{numcon}
Let $Z \subset X$ be a closed subset and let $V$ be a quasi-unsplit family of rational curves. Then, for every integer $m$,
every curve contained in $\cloc_m(\V)_Z$ is numerically equivalent to
a linear combination with rational coefficients
   $$\lambda C_Z + \mu C_V,$$
where $C_Z$ is a curve in $Z$, $C_V$ is a curve among those parametrized by $V$ and $\lambda \ge 0$.
\end{lemma}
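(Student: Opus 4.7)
The plan is to induct on $m$, invoking Lemma \ref{numeq} at each inductive step and using the quasi-unsplit hypothesis to replace the classes of components of chain-cycles by rational multiples of $[C_V]$. The nontrivial content beyond this bookkeeping is the sign constraint $\lambda \ge 0$, which is precisely what the Mori-type argument cited from \cite[Proof of Lemma 1.4.5]{BSW} at the beginning of this subsection is supposed to provide. The base case $m = 0$ is immediate since $\cloc_0(\V)_Z = Z$: any curve $C$ there has $[C] = 1 \cdot [C] + 0 \cdot [C_V]$ with the curve in $Z$ being $C$ itself.

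For the inductive step, assume the result for $m-1$ and set $Y := \cloc_{m-1}(\V)_Z$. Let $C \subset \cloc_m(\V)_Z$. If $C \subset Y$, apply the inductive hypothesis directly. Otherwise $C \subset \loc(\V)_Y$, and Lemma \ref{numeq} supplies an expression
$$[C] = a[C_Y] + \sum_i b_i [\Gamma_i],$$
with $C_Y \subset Y$ a curve and $\Gamma_i$ irreducible components of cycles parametrized by $\V$ which meet $Y$. The quasi-unsplit hypothesis yields $[\Gamma_i] = r_i [C_V]$ for some $r_i \in \mathbb{Q}$, and the inductive hypothesis gives $[C_Y] = \lambda'[C_Z] + \mu'[C_V]$ with $\lambda' \ge 0$. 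Substituting,
$$[C] = (a\lambda')[C_Z] + \Bigl(a\mu' + \sum_i b_i r_i\Bigr)[C_V],$$
which has exactly the required shape provided $a\lambda' \ge 0$.

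Since $\lambda' \ge 0$ by induction, everything reduces to the sign assertion $a \ge 0$ in the decomposition furnished by Lemma \ref{numeq}. This is the main obstacle: it is not explicit in the statement of Lemma \ref{numeq}, and it is the refinement that the Mori specialization argument of \cite[Proof of Lemma 1.4.5]{BSW} contributes. Heuristically, one degenerates $C$ along the chain of $\V$-cycles that connects it to $Z$ and thereby exhibits $[C]$ as an effective limit of the class of a curve in $Z$ plus positive multiples of chain components; the coefficient of $[C_Z]$ inherits this effectivity. Once $a \ge 0$ is secured in each application, the induction closes and the final expression for $[C]$ has non-negative coefficient of $[C_Z]$, as required.
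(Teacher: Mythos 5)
Your inductive scaffolding is fine and is the natural way to organize the statement: induct on $m$, note that an irreducible curve in $\cloc_m(\V)_Z=\cloc_{m-1}(\V)_Z\cup\loc(\V)_{\cloc_{m-1}(\V)_Z}$ lies in one of the two closed pieces, apply Lemma (\ref{numeq}), use quasi-unsplitness to convert the components $\Gamma_i$ into multiples of $[C_V]$, and observe that nonnegativity propagates through $a\lambda'\ge 0$. The problem is that you never establish the one point you yourself identify as the entire content of the lemma: the nonnegativity $a\ge 0$ of the coefficient of the curve $C_Y\subset Y$ in the single-step decomposition. Lemma (\ref{numeq}) indeed does not provide it, and the paper itself gives no written proof either, referring instead to the Mori-type argument of \cite[Proof of Lemma 1.4.5]{BSW}; but a proof attempt must supply that argument, and your heuristic substitute misdescribes it. Nothing is ``degenerated along the chain'': the curve $C$ is a fixed curve in the locus, not a member of the family, and an ``effective limit'' picture would, if anything, also impose a sign on $\mu$, which is neither claimed nor true in general.

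The mechanism that actually produces $\lambda\ge 0$ is a ruled-surface computation. For an irreducible curve $C\subset\loc(\V)_Y$ not numerically proportional to $[V]$, every point of $C$ lies on a cycle of $\V$ meeting $Y$; choosing an irreducible curve in the universal family which dominates $C$ and lies over the subfamily of cycles meeting $Y$, and normalizing the base, one gets a surface $S$ fibered in rational $1$-cycles over a smooth curve $T$ with a map $g\colon S\to X$ such that $C$ lifts to a multisection $\tilde C$, every fiber maps to a cycle of $\V$ (whose components are numerically proportional to $[V]$ by quasi-unsplitness), and $g^{-1}(Y)$ meets every fiber, hence contains either a fiber component or a multisection $B$ with $g(B)\subset Y$. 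In $\cycl(S)$ one writes $[\tilde C]=a[B]+(\textrm{fiber classes})$ with $a>0$ because $a$ is a ratio of degrees over $T$, and pushing forward by $g$ gives the one-step statement, the degenerate cases ($g(B)$ a point, or only fiber components over $Y$) giving $\lambda=0$. It is this positivity of degrees over the base, not effectivity of a specialization, that yields the sign; without some version of this argument your write-up is a correct reduction of the lemma to its only nontrivial claim, not a proof of it.
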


We build on Lemma (\ref{numcon}), to analyze particular situations which will appear in the proof
of Theorem (\ref{main2}).

\begin{lemma}\label{face}
Let $\li^1, \li^2$ and $\li^3$ be numerically independent unsplit families on $X$.
Assume that we have $X=\cloc_{m_1}(\El^1,\El^2)_{\cloc_{m_2}(\El_3)_x}$ for some point $x \in X$ and some integers $m_1, m_2$. Then the numerical classes $[\El^1], [\El^2]$ lie in a (two-dimensional) extremal face of $\cone(X)$.
\end{lemma}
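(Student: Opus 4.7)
The plan is to exhibit a nef class $D\in N^1(X)_{\mathbb R}$ with $D\cdot\El^1=D\cdot\El^2=0$ and $D\cdot\El^3>0$. Granted such a $D$, the face $F:=D^{\perp}\cap\cone(X)$ is extremal; since $D\cdot\El^3\neq 0$, the subspace $D^{\perp}$ is exactly the $2$-plane $\langle[\El^1],[\El^2]\rangle$, and as $F$ contains the independent classes $[\El^1],[\El^2]$ it has dimension two, which is the conclusion of the lemma.

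To produce $D$, I first observe that the hypothesis realises $X$ as the single rc$(\V^1,\V^2,\V^3)$-equivalence class of $x$, obtained by concatenating the $\El^3$-chain of length $m_2$ from $x$ with an $\{\El^1,\El^2\}$-chain of length $m_1$. Proposition~\ref{rhobound}, combined with the assumed numerical independence of $[\El^1],[\El^2],[\El^3]$, then forces $\rho_X=3$, so these classes form a basis of $N_1(X)_{\mathbb R}$. I define $D$ to be the dual-basis element determined by $D\cdot\El^i=\delta_{i,3}$.

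The core step is to verify that $D$ is nef. Fix an irreducible curve $C\subset X$ and set $Z:=\cloc_{m_2}(\El^3)_x$. A short induction on chain length extends Lemma~\ref{numcon} to the two-family chain $\cloc_{m_1}(\El^1,\El^2)_Z$ and yields
\[ [C]=\lambda[C_Z]+\mu_1[\El^1]+\mu_2[\El^2] \]
with $\lambda\ge 0$ and $C_Z$ a curve in $Z$. A second application of Lemma~\ref{numcon}, this time inside $Z=\cloc_{m_2}(\El^3)_x$ with base the single point $\{x\}$ (which carries no curves), collapses to $[C_Z]=\mu_3[\El^3]$ for some rational $\mu_3$. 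Combining the two, $D\cdot C=\lambda\mu_3$.

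The main obstacle will be controlling the sign of $\lambda\mu_3$: Lemma~\ref{numcon} records only $\lambda\ge 0$, whereas I also need $\mu_3\ge 0$. The extra information is supplied by the Mori-style degeneration at the heart of the lemma: for a quasi-unsplit family $V$, the coefficient of $[V]$ arises as a non-negative count of cycles appearing as limit components in the breaking-up of $C$, so it too is non-negative. With both $\lambda\ge 0$ and $\mu_3\ge 0$, one obtains $D\cdot C\ge 0$ for every irreducible curve $C\subset X$, whence $D$ is nef and the first paragraph completes the argument.
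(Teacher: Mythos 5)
Your route is in substance the same as the paper's: after getting $\rho_X=3$ from Proposition (\ref{rhobound}), both arguments rest on the single fact that, in the basis $[\El^1],[\El^2],[\El^3]$ of $\cycl(X)$, every curve of $X$ has nonnegative coefficient on $[\El^3]$, obtained by repeated application of Lemma (\ref{numcon}) starting from $Z=\cloc_{m_2}(\El^3)_x$. The paper then verifies directly that the plane $\langle [\El^1],[\El^2]\rangle$ cuts $\cone(X)$ in an extremal face, while you encode the same sign condition as nefness of the dual functional $D$ and use the standard fact that $D^{\perp}\cap\cone(X)$ is an extremal face; this is only a repackaging, not a different proof.

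The one step that does not stand as written is your justification of $\mu_3\ge 0$. It is not a feature of Lemma (\ref{numcon}) that the coefficient of $[V]$ is a nonnegative count of limit components: that coefficient can be negative in general. For instance, on a ruled surface $X=\mathbb F_e$ ($e>0$) take $V$ the ruling and $Z$ a section with $Z^2=e>0$; the negative section lies in $\cloc_1(\V)_Z=X$ and is numerically $C_Z-ef$, so the $[V]$-coefficient is $-e<0$. What rescues your argument is not the internal structure of Lemma (\ref{numcon}) but the degenerate form of its second application: since the base there is the single point $x$, one only gets that $[C_Z]$ is proportional to $[\El^3]$ (as in Corollary (\ref{numcor})), and then $\mu_3>0$ is immediate by intersecting with an ample divisor, because $C_Z$ is an effective irreducible curve and $[\El^3]$ is the class of an effective curve. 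With that one-line replacement your proof is complete and coincides with the paper's.
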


\begin{proof} First of all notice that, by Proposition (\ref{rhobound}), we have that $\rho_X=3$.\\
By repeated applications of Lemma (\ref{numcon}), starting with  $Z:= \cloc_{m_2}(\El_3)_x$, the numerical class of every curve in $X$ can be written as $\sum_1^3 a_j[\El^j]$,
with $a_3 \ge 0$.\\
Let $\Pi \subset \cycl(X)$ be the plane defined by $[\El^1]$ and $[\El^2]$ and let $C^1$ and $C^2$ be two curves such that $[C^1] + [C^2] \in \Pi$; write $[C^i]= \sum c^i_j[\El^j]$, with $c^i_3 \ge 0$.\\
 Asking for $[C^1] + [C^2]$ to be in $\Pi$ amounts
to impose $c^1_3 + c^2_3 =0$, hence $c^1_3 = c^2_3 =0$ and both $[C^1]$ and $[C^2]$
belong to $\Pi$. 
\end{proof}

\begin{lemma}\label{face2}
Let $\li^1, \li^2$ and $\li^3$ be numerically independent unsplit families on $X$.
Assume that we have $X=\loc(\El^2,\El^1)_{\cloc_{m}(\El_3)_x}$ for some point $x \in X$ and some positive integer $m$. Then $[\li^1]$ is extremal in $\cone(X)$.
\end{lemma}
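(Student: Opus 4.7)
The plan is to show $[\El^1]$ is extremal by a projection argument combined with pointedness of $\overline{\cone(X)}$. By Proposition~\ref{rhobound}, $\rho_X = 3$, so $[\El^1], [\El^2], [\El^3]$ form a basis of $\cycl(X)$. Set $Z := \cloc_{m}(\El^3)_x$ and $Z' := \loc(\El^2)_Z$, so by hypothesis $X = \loc(\El^1)_{Z'}$.

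The first step is to apply Lemma~\ref{numcon} iteratively, in the spirit of the proof of Lemma~\ref{face}. Starting from the base $\{x\}$ (which contains no curves), every curve $C_Z \subseteq Z$ has class $\mu_3[\El^3]$, and pointedness of $\overline{\cone(X)}$ forces $\mu_3 \geq 0$. A second application with $V = \El^2$ and base $Z$ shows that every curve $C_{Z'} \subseteq Z'$ has class lying in the subspace $W := \langle [\El^2], [\El^3]\rangle$. A final application with $V = \El^1$ and base $Z'$ yields, for every irreducible curve $C \subseteq X$, a decomposition
$$[C] = \lambda[C_{Z'}] + \mu[\El^1], \quad \lambda \geq 0,$$
with $C_{Z'} \subseteq Z'$ a curve.

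Next, let $\pi : \cycl(X) \to W$ denote the projection along $\mathbb{R}[\El^1]$. The decomposition gives $\pi([C]) = \lambda[C_{Z'}]$, which is itself a non-negative multiple of an effective curve class, so $\pi(\cone(X)) \subseteq \cone(X) \cap W$. Passing to closures through the continuity of $\pi$, one obtains $\pi(\overline{\cone(X)}) \subseteq K := \overline{\cone(X)} \cap W$. Since $\overline{\cone(X)}$ is pointed (by Kleiman's criterion applied to an ample class), so is the subcone $K$.

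To conclude, suppose $v_1 + v_2 = \gamma[\El^1]$ with $v_i \in \overline{\cone(X)}$ and $\gamma \geq 0$. Applying $\pi$ gives $\pi(v_1) + \pi(v_2) = 0$ in the pointed cone $K$, so $\pi(v_i) = 0$ and $v_i = B_i[\El^1]$ for some $B_i \in \mathbb{R}$. A final appeal to pointedness of $\overline{\cone(X)}$ forces $B_i \geq 0$, so $v_i \in \mathbb{R}_{\geq 0}[\El^1]$, establishing extremality of $[\El^1]$. The main subtle point is that Lemma~\ref{numcon} produces a decomposition only for irreducible curves; the projection argument sidesteps this by packaging the constraint into a single inclusion $\pi(\cone(X)) \subseteq K$ that passes to closures automatically.
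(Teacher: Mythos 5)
Your proposal is correct, but it follows a genuinely different route from the paper's. The paper first invokes Lemma~(\ref{face}) to place $[\li^1]$ and $[\li^2]$ in a two-dimensional extremal face $\Sigma$ of $\cone(X)$; it then applies Lemma~(\ref{numcon}) once more to $X=\loc(\El^1)_{Z'}$ with $Z'=\loc(\El^2)_{\cloc_{m}(\El^3)_x}$ to show that every curve whose class lies in $\Sigma$ is numerically equivalent to $\alpha'[\li^2]+\beta[\li^1]$ with $\alpha'\ge 0$, and finally verifies extremality of the ray by testing it on a pair of curves, using the extremality of $\Sigma$. You bypass the face entirely: the same chain of applications of Lemma~(\ref{numcon}) (the degenerate one with base $\{x\}$ is used in the paper in exactly the same way inside Lemma~(\ref{face}), and $\rho_X=3$ comes from Proposition~(\ref{rhobound}) as you say) gives, for every irreducible curve, a class $\lambda w+\mu[\El^1]$ with $\lambda\ge 0$ and $w\in\langle[\li^2],[\li^3]\rangle\cap\cone(X)$, and you conclude by projecting along $\mathbb R[\El^1]$ onto $\langle[\li^2],[\li^3]\rangle$ and appealing twice to pointedness of $\overline{\cone(X)}$ (Kleiman); all these steps check out. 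What each approach buys: the paper's argument is very short given that Lemma~(\ref{face}) is already available, and the face $\Sigma$ it produces is part of what is later extracted from this lemma (in the $n=3$ step of the proof of Theorem~(\ref{main1}) the paper reads off from Lemma~(\ref{face2}) both the extremal ray and a two-dimensional face containing it — information your proof does not furnish directly, though it still follows from Lemma~(\ref{face})); your argument, on the other hand, is self-contained modulo Lemma~(\ref{numcon}), establishes extremality for arbitrary classes in the closed cone rather than only for effective curve classes, and handles the closure and nonnegativity bookkeeping transparently through the continuity of the projection and pointedness, in effect reproving the relevant content of Lemma~(\ref{face}) in projected form.
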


\begin{proof} By Lemma (\ref{face}) the numerical classes of $\li^1$ and $\li^2$
lie in an extremal face $\Sigma$. Let $C \subset X$ be a curve whose numerical class is contained in $\Sigma$.\\
Since $X=\loc(\El^1)_Z$ with $Z=\loc(\li^2)_{\cloc_{m}(\El_3)_x}$, by Lemma (\ref{numcon})
there is an effective curve $C_{Z} \subset \loc(\li^2)_{\cloc_{m}(\El_3)_x}$ such that
$$C \equiv \alpha C_{Z} + \beta C_1$$
with $C_1$ parametrized by $\li^1$ and $\alpha \ge 0$.\\
Since  $[C] \in \Sigma$, then also $[C_{Z}] \in \Sigma$; on the other hand, by Lemma (\ref{numcon}) applied to $Z$ we have that $[C_{Z}] \in \langle [\li^2], [\li^3] \rangle$, so  $[C_{Z}]=\lambda[\li^2]$.\\
We have thus shown that every curve whose numerical class belongs to $\Sigma$ is equivalent to
$\alpha'[ \li^2] + \beta [\li^1]$ with $\alpha' \ge 0$.
Let now $B^1$ and $B^2$ be two curves such that $[B^1] + [B^2] \in \mathbb R_+[\li^1]$;
 by the extremality of $\Sigma$, both
$[B^1]$ and $[B^2] $ are contained in $\Sigma$.
Write $[B^i] = \alpha_i'[\li^2] + \beta_i[\li^1]$ with $\alpha_i' \ge 0$. Then it is clear that
$[B^1] + [B^2]  \in \mathbb R_+[\li^1]$ if and only if $\alpha'_i=0$ for $i=1,2$.
\end{proof}


\section{Examples} \label{exa}

\begin{example}\label{mainex}{\rm(}{\bf RCC manifolds with large Picard number}{\rm)}\\ Let $P_1,...,P_k$ be general points of $\mathbb{P}^n$  with $$k\leq \left(\begin {array}{c}
n+3\\
3
\end{array}\right)-(2n+2),$$
and let $\f:X \to \proj^n$ be the blow-up of $\mathbb{P}^n$ at $P_1,...,P_k$.
Denote by $E_i$ with  ${i=1,\ldots,k}$  the exceptional divisors.
Let  $V$ be the family of deformations of the strict transform of a general line in $\mathbb{P}^n$ and define
$H$ to be
$$H:=\varphi^*\mathcal{O}_{\mathbb{P}^n}(3)-\left(\sum_{i=1}^{k}E_i\right).$$
By \cite{Co} the line bundle $H$ is very ample, thus the  pair $(X,H)$ is RCC with respect to $V$ and
$\rho_X=k+1$.\\
Notice that, if $k \ge 2$ and $n >2$ then $X$ is not a Fano manifold. In fact, if we consider the strict transform $\ell$ of the line in $\mathbb{P}^n$ passing through $P_1$ and $P_2$, then, by the canonical bundle formula of the blow-up, $-K_X\cdot \ell \leq 0$.
\end{example}

\begin{example} {\rm(}{\bf Products}{\rm)}\\
Let $Y$ be a conic-connected manifold with Picard number two, and denote by $H_Y$ the hyperplane divisor.\\
Trivial examples can be obtained by taking the product $X:=Y \times \proj^r$, with projections $p_1$ and $p_2$ and setting $H$ to be $p_1^*H_Y \otimes p_2^* \Ol_{\proj^r}(1)$.\end{example}

\begin{example}\label{?} {\rm(}{\bf Adjunction scrolls}{\rm)}\\
Let $Y$ be $\proj^{r_1} \times \proj^{r_2} \times \proj^{r_3}$ with $r_i \ge 2$, let $X$ be a general member
of the linear system $|\Ol(1,1,1)|$ and let $H$ be the restriction to $X$ of $\Ol(1,1,1)$.
Then $(X,H)$ is a RCC-manifold which has three extremal contractions which are adjunction scrolls.
If $r_i < r_j +r_k$ then the contraction onto $\proj^{r_j} \times \proj^{r_k}$ has a $(r_j +r_k -r_i -1)$-dimensional family of jumping fibers, hence it is not a classical scroll.
Notice that the condition is always fulfilled by at least two indexes, and, taking $r_1=r_2=r_3$, it is fullfilled
by all, hence in this case $X$ has no classical scroll contractions.
\end{example}

\begin{example}\label{onediv} {\rm(}{\bf Projective bundles}{\rm)}\\
Let $Y$ be  a conic-connected manifold of Picard number two; such manifolds have two extremal contractions  onto projective spaces, one of which is a classical scroll. We denote this contraction by  $\sigma_1$ and the other contraction by $\sigma_2$.
Denote by $\ac_1$ and $\ac_2$ respectively the line bundles  $\sigma_1^*\Ol_\proj(1)$ and 
$\sigma_2^*\Ol_\proj(1)$.\\
For every integer $r \ge 1$ consider the vector bundles $\E_i:= (\Ol_Y)^{\oplus r} \oplus \ac_i$ on $Y$ and their projectivizations $X_i=\proj(\E_i)$, with natural projections $\pi_i: X_i \to Y$.\\
Let $\xi_i$ be the tautological line bundle of $\E_i$, and set $H:= \xi_i + \pi_i^*\ac_1+\pi_i^*\ac_2$; $H$ is the sum of three nef line bundles which do not all vanish on the same curve, hence it is ample.\\
The restriction of $\E_i$ to conics in $Y$ is $\Ol^{\oplus r} \oplus \Ol(1)$; let
$V$ be the family of  sections over smooth conics in $Y$ corresponding to the surjections $(\E_i)_{|\gamma} \to \Ol_\gamma(1)$.\\ We claim that $(X_i,H)$ is RCC with respect to $V$; first of all it is clear that 
$$\xi_i \cdot V= \pi_i^* \ac_1 \cdot V = \pi_i^* \ac_2 \cdot V =1,$$
hence $H \cdot V=3$. Let now $x$ and $x'$ be general points in $X_i$; let $y$ and $y'$ be the images of these points in $Y$ and let $\gamma$ be a conic in $Y$ passing through $y$ and $y'$. By the generality
of $x$ and $x'$ we can assume that $\gamma$ is smooth. Let $\Gamma$ be the projectivization
of the restriction of $\E_i $ to $\gamma$. The variety $\Gamma$ is the blow-up of $\proj^r$ in
a linear subspace $\Lambda$ of codimension two, and a general curve in $V$ contained in $\Gamma$ is
the strict transform of a line in $\proj^r$ not meeting $\Lambda$. By the generality of $x$ and $x'$ there
is a line in $\proj^r$ not meeting $\Lambda$ whose strict transform contains $x$ and $x'$.\par
\medskip
It's straightforward to check that all the manifolds constructed in this way are Fano manifolds with three elementary contractions;
notice that, depending on the choice of $Y$ and $\ac_i$, the other contractions of $X_i$ can
be of different kind, namely:
$$
\begin{array}{|c|c|c|}
\hline
Y & \;\ac_i\; &\mbox{Contractions}\\\hline
\mbox{$\proj^r \times \proj^s$}& \;1-2 \;&\; \;\mbox{Fiber type - Divisorial}\;\;\\\hline
\mbox{Hyperplane section of $\proj^r \times \proj^s$}& 1-2 & \mbox{Fiber type - Divisorial}\\\hline
\mbox{\; Blow-up of $\proj^n$ along a linear subspace \;}& 1 & \mbox{Fiber type - Small} \\\hline
\mbox{\; Blow-up of $\proj^n$ along a linear subspace \;}& 2 & \mbox{Divisorial - Divisorial}\\\hline
\end{array}
$$
\end{example}

\begin{example}\label{twodiv} {\rm(}{\bf More projective bundles}{\rm)}\\
In Example (\ref{onediv}) we considered bundles on all possible conic-connected manifolds
with Picard number two. Other examples can be constructed taking bundles over the 
product of two projective spaces; this is possible because, in this case, through
a pair of general points, there is not just one conic, but a one-parameter family of conics.\\
Let $Y$ be $\proj^r \times \proj^s$ and let $\ac_1$ and $\ac_2$ be as in Example (\ref{onediv}). For every integer $r \ge 1$ consider the vector bundles $\E_{ij}:= (\Ol_Y)^{\oplus r} \oplus \ac_i \oplus \ac_j$ on $Y$ and their projectivizations $X_{ij}=\proj(\E_{ij})$, with natural projection $\pi_{ij}: X_{ij} \to Y$.\\
Let $\xi_{ij}$ be the tautological line bundle of $\E_{ij}$, and set $H:= \xi_i + \pi_{ij}^*\ac_1+\pi_{ij}^*\ac_2$; $H$ is the sum of three nef line bundles which do not all vanish on the same curve, hence it is ample.\\
The restriction of $\E_{ij}$ to conics in $Y$ is $\Ol^{\oplus r} \oplus \Ol(1) \oplus \Ol(1)$; let
$V$ be the family of  sections over smooth conics in $Y$ corresponding to the surjections $(\E_{ij})_{|\gamma} \to \Ol_\gamma(1)$.\\ We claim that $(X_{ij},H)$ is RCC with respect to $V$; first of all it is clear that 
$$\xi_i \cdot V= \pi_i^* \ac_1 \cdot V = \pi_i^* \ac_2 \cdot V =1,$$
hence $H \cdot V=3$. Let now $x$ and $x'$ be general points in $X_{ij}$; we claim that there is at most a finite number of curves in $V$ passing through $x$ and $x'$. If this were not the case, through $x$ and $x'$
there would be a reducible cycle parametrized by $\V$. By Proposition (\ref{connred}), this cycle is composed of three lines (i.e. curves of $H$-degree one); since there is only one dominating family of lines -  the lines in the fibers of $\pi_{ij}$ - and $x$ and $x'$ are general, this is impossible.\\
For every conic $\gamma$ passing through $y=\pi_{ij}(x)$ and $y'=\pi_{ij}(x')$ we can compute the dimension
of the space of curves parametrized by $V$ contained in $\pi_{ij}^{-1}(\gamma)$: it is the dimension
of $H^0((\E_{ij}^{\vee}(1))_{|\gamma})$ minus one, which is $2r +1$.\\
Since there is a one-parameter family of conics passing through $y$ and $y'$, the dimension
of the space of curves $T \subset V$ parametrizing curves meeting $F_y:=\pi_{ij}^{-1}(y)$ and  $F_{y'}:=\pi_{ij}^{-1}(y')$
is $2r +1 +1=2r +2$. \\
Since $F_y$ and $F_y'$ have both dimension $r+1$ and we have proved above that through two general
points there is at most a finite number  of curves parametrized by $T$, we can conclude that
through two general points in $F_y$ and $F_y'$ there is a curve parametrized by $V$.

\par
\medskip
It's straightforward to check that all the manifolds constructed in this way are Fano manifolds with three elementary contractions;
notice that, depending on the choice of  $\ac_i$ and $\ac_j$, the other contractions of $X_{ij}$ can
be of different kind, namely:
$$
\begin{array}{|c|c|c|}
\hline
\;\;\ac_i \;\;&\; \;\ac_j\;\; &\mbox{Contractions}\\\hline
1 & 1 & \mbox{Fiber type - Small}\;\\\hline
1 & 2 & \;\;\mbox{Divisorial - Divisorial}\;\;\\\hline
\end{array}
$$
\end{example}

\section{Preliminaries}

\begin{definition} Let $(X,H)$ be a polarized manifold of dimension $n$; if there exists a dominating family $V$ of rational curves such that $H \cdot V=3$ and through two general points of $X$ there is a curve
parametrized by $V$  we will say that $X$ is Rationally Cubic Connected - RCC for short - with respect to $V$.
\end{definition}

\begin{remark} Notice that we are asking that a general cubic through two general points
is irreducible. Examples in which $(X,H)$ is connected by reducible cycles of degree three
can be constructed by taking any projective bundle over the projective space  with a section.
\end{remark}

Our assumptions on $V$ can be rephrased by saying that for a general point $x \in X$
the subset $\loc(V)_x$ is dense in $X$; by \cite[Proposition 4.9]{De} a general
curve $f:\mathbb{P}^1\rightarrow X$ parametrized by $V$ is a $1$-free curve, i.e.
$$f^{*}TX\simeq\mathcal{O}_{\mathbb{P}^{1}}(a_{1})\oplus...\oplus\mathcal{O}_{\mathbb{P}^{1}}(a_{n})$$
with $a_{1}\geq a_{2}\geq...\geq a_{n}$ and  $a_1\geq 2$, $a_{n}\geq
1$. This implies that 
\begin{equation} \label{degV}
-K_X \cdot V= -K_X\cdot f_*\mathbb{P}^1=\sum_{1}^{n}a_i\geq n+1.
\end{equation}
Since the locus of the corresponding family
of rational $1$-cycles $\V$ is closed and $\loc(V)_x \subset \loc(\V)_x$, we have that
$\loc(\V)_x=X$ for a general $x \in X$.\\
 By Lemma (\ref{numeq}) it follows that
$\cycl(X)$ is generated by the numerical classes of irreducible components of cycles
parametrized by $\V$ passing through $x$. In particular the Picard number of $X$ is one if and only 
if, for some $x \in X$, the subfamily $V_x$ is quasi-unsplit. More precisely we have the
following:

\begin{proposition}
Let $(X,H)$ be RCC with respect to a family $V$; then 
\begin{enumerate}
\item there exists $x \in X$ such that $V_x$ is proper if and only if $(X,H) \simeq (\proj^n,\Ol_{\proj}(3))$;
\item there exists $x \in X$ such that $V_x$ is quasi-unsplit if and only if $X$ is a Fano manifold of Picard number one and index $r(X) \ge \frac{n+1}{3}$ with fundamental divisor $H$.
\end{enumerate}
\end{proposition}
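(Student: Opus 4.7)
For Part (1), I start by observing that the properness of $V_x$ makes $\loc(V_x)=\loc(V)_x$ a closed subset of $X$; since $\loc(V)_x$ is dense in $X$ by the RCC hypothesis, this forces $\loc(V_x)=X$. Proposition \ref{iowifam}(b) then yields $n\ge -K_X\cdot V-1$, which combined with \eqref{degV} forces the equality $-K_X\cdot V=n+1$. Properness of $V_x$ also makes $V$ locally unsplit, so Corollary \ref{numcor}(a) applied at a general $x$ gives $N_1(X)=\langle[V]\rangle$, whence $\rho_X=1$ and $X$ is Fano. At this point $X$ is a smooth Fano variety with $\rho_X=1$ carrying a dominating family $V$ with $V_x$ proper and $\dim V_x=-K_X\cdot V-2=n-1$; I would then invoke the Cho--Miyaoka--Shepherd-Barron characterization of projective space (in Kebekus's form) to conclude $X\simeq\proj^n$ with $V$ the family of lines, after which $H\cdot V=3$ forces $H=\Ol_{\proj^n}(3)$. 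The converse direction is immediate since lines through a point of $\proj^n$ form a proper family parametrized by a $\proj^{n-1}$.

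For Part (2), suppose $V_x$ is quasi-unsplit for some $x$. Since $\loc(\V)_x=X$ (as recalled just before the statement), Lemma \ref{numeq} applied with $Y=\{x\}$ shows that every curve on $X$ is numerically a rational combination of components of cycles of $\V$ through $x$; by the quasi-unsplit hypothesis each such component has class proportional to $[V]$, so $\rho_X=1$ and $X$ is Fano. Writing $H=aH_0$ with $H_0$ the fundamental (ample generator) divisor, the relation $a\cdot(H_0\cdot V)=3$ gives $a\in\{1,3\}$: if $a=3$ then $-K_X\cdot V=r(X)\ge n+1$, and by Kobayashi--Ochiai $(X,H)\simeq(\proj^n,\Ol(3))$, which is just the situation of Part~(1); if $a=1$ then $H=H_0$ is the fundamental divisor and $r(X)=(-K_X\cdot V)/3\ge(n+1)/3$, as claimed. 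The converse is trivial: $\rho_X=1$ makes every irreducible curve on $X$ have class proportional to $[V]$, so \emph{a fortiori} every component of every reducible cycle in $\V$ does, making $V_x$ quasi-unsplit at every point.

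The substantive step is the invocation, in Part~(1), of the projective-space characterization of Cho--Miyaoka--Shepherd-Barron/Kebekus, which requires external input; everything else is bookkeeping with the dimension estimate of Proposition \ref{iowifam}, the numerical equivalence Lemma \ref{numeq}, and Corollary \ref{numcor} already established in the paper, together with the Kobayashi--Ochiai bound on the index of a Fano manifold.
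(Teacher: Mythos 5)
Your proposal is correct in substance and follows the same skeleton as the paper's own (very terse) proof: part (1) is in both cases an appeal to the Cho--Miyaoka--Shepherd-Barron characterization in Kebekus's form \cite{Kepn}, and in part (2) the Picard number one statement comes from Lemma (\ref{numeq}) while the index bound comes from intersecting with $V$. The one place where you genuinely diverge is the claim that $H$ is the fundamental divisor: the paper disposes of it in one line, observing that a reducible cycle in $\V$ (which exists as soon as $V_x$ is quasi-unsplit but not proper) has a component of $H$-degree one, forcing $H$ to be primitive; you instead write $H=aH_0$ with $a\in\{1,3\}$ and use Kobayashi--Ochiai to show that $a=3$ forces $(X,H)\simeq(\proj^n,\Ol_{\proj}(3))$. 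Your route needs an extra classical input, but it has the merit of making explicit the edge case $(\proj^n,\Ol_{\proj}(3))$, in which $H$ is \emph{not} the fundamental divisor and which the paper's ``if and only if'' silently relegates to part (1). One small caution in part (1): the hypothesis provides a single point $x$ with $V_x$ proper, while your appeals to the density of $\loc(V)_x$ and to local unsplitness tacitly treat that $x$ as general; this is repairable without any generality assumption, since for any $x\in\loc(V)$ with $V_x$ proper and non-empty, Proposition (\ref{iowifam})(b) combined with the bound (\ref{degV}) already gives that every component of the closed set $\loc(V_x)$ has dimension at least $n$, hence $\loc(V_x)=X$, which is the non-degeneracy needed to run the argument of \cite{Kepn} at that point.
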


\begin{proof}
In the first case $X$ is the projective space and $V$ is the family of lines by \cite[Proof of Theorem 1.1]{Kepn}.
In the second case the Picard number of $X$ is one by Lemma (\ref{numeq}), hence
$-K_X \ge \frac{n+1}{3} H$ by taking intersection numbers with $V$. \\
The existence of a reducible cycle in $\V$ provides a curve with intersection number one with $H$.
\end{proof}

\section{RCC-manifolds with plenty of reducible cubics}

The results in the previous section show that, if the Picard number of $X$ is greater than one, through
a general point there is at least one reducible cycle in $\V$ whose components are not
all numerically proportional to $V$.
Since $H \cdot V=3$, a cycle in $\V$ can split into two or three irreducible rational components.
From now on we will call a component of $H$-degree one a \textit{line} and a component of $H$-degree two a \textit{conic}.\\
Families of lines are easier to handle, since they cannot degenerate further, i.e., they are unsplit families; for this reason the first possibility that we consider is the following: through
a general point of $X$ there is a reducible cycle consisting of three lines.

\begin{definition}
We will say that a manifold $(X,H)$ which is RCC with respect to $V$ is {\em covered
by $V$-triplets of lines} if through a general point of $X$ there is a connected
rational 1-cycle $\ell^1 + \ell^2 + \ell^3$ such that $[\ell^1] + [\ell^2] + [\ell^3]=[V]$.
\end{definition}

\subsection{RCC-manifolds covered by triplets of lines}\par
\medskip

We start by considering the following - a priori different - situation:

\begin{definition}
We will say that a manifold $(X,H)$ which is RCC with respect to $V$ is {\em connected
by $V$-triplets of lines} if there exist three families of lines $\El^1,\li^2,\li^3$ with $[V]=[\li^1]+[\li^2]+[\li^3]$ such that $X$ is  rc$(\li^1,\li^2,\li^3)$-connected.
\end{definition}

\begin{proposition}\label{3fam} If $(X,H)$ is connected by $V$-triplets of lines then $\rho_X \le 3$.\\ 
If equality holds
then, up to reordering,  $\li^1$ is a covering family, $\li^2$
is horizontal and dominating w.r. to the rc$(\li^1)$-fibration and $\li^3$
is horizontal and dominating w.r. to the rc$(\li^1,\li^2)$-fibration.
\end{proposition}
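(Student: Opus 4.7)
The bound $\rho_X \le 3$ is immediate from Proposition~(\ref{rhobound}): each family of lines $\li^i$ is unsplit (curves of $H$-degree one cannot degenerate further), hence quasi-unsplit, and rc$(\li^1,\li^2,\li^3)$-connectedness gives $\rho_X \le 3$, with equality forcing numerical independence of $[\li^1], [\li^2], [\li^3]$. From now on assume $\rho_X = 3$.

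I would first show that one of the three families, relabeled $\li^1$, is covering. If all three $\loc(\li^i)$ were proper closed subsets of $X$ (they are closed by unsplitness of the $\li^i$), a general $x$ avoiding all of them would have no $\li^i$-curve passing through it, hence $\cloc_m(\li^1,\li^2,\li^3)_x=\{x\}$ for every $m$, contradicting rc-connectedness; by irreducibility of $X$ some $\loc(\li^i)$ equals $X$. Next I would take the rc$(\li^1)$-fibration $\pi_1\colon X^0\to Z_1^0$ from Theorem~(\ref{rcfib}) and observe that $\dim Z_1^0\ge 1$: otherwise point (iii) of Theorem~(\ref{rcfib}) gives a uniform bound $m$ so that $X^0\subseteq\cloc_m(\li^1)_x$, and passing to the closure makes $X$ rc$(\li^1)$-connected, forcing $\rho_X=1$ via Proposition~(\ref{rhobound}). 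Finally I would check that at least one of $\li^2,\li^3$, relabeled $\li^2$, is horizontal dominating with respect to $\pi_1$. Horizontality is automatic since $[\li^2],[\li^3]\notin\mathbb R[\li^1]$ by numerical independence. If neither $\pi_1(\loc(\li^2))$ nor $\pi_1(\loc(\li^3))$ were dense in $Z_1^0$, their union would be a proper closed subset of $Z_1^0$; then for a general $x\in X^0$ with $z=\pi_1(x)$ outside this union, no $\li^2$- or $\li^3$-curve meets the fiber $\pi_1^{-1}(z)$, while every $\li^1$-curve through a point of $\pi_1^{-1}(z)$ stays in that fiber, so $\cloc_m(\li^1,\li^2,\li^3)_x\subseteq\pi_1^{-1}(z)\subsetneq X$ for every $m$, a contradiction.

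The same strategy applied to the rc$(\li^1,\li^2)$-fibration $\pi_2\colon X^{00}\to Z_2^0$ gives the remaining claim: $\dim Z_2^0\ge 1$ (else $\rho_X\le 2$ by Proposition~(\ref{rhobound})), $\li^3$-curves are not contracted since $[\li^3]\notin\langle[\li^1],[\li^2]\rangle$, and the same "general fiber over a point outside $\pi_2(\loc(\li^3))$" obstruction forces $\pi_2(\loc(\li^3))$ to be dense in $Z_2^0$. I do not foresee a substantive obstacle; the only care needed is to choose the general $x$ at each step inside the open locus where the rc-fibration is defined and with image avoiding the relevant proper closed subvarieties of the base, which is bookkeeping rather than a serious difficulty.
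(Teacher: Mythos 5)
Your proposal is correct and follows essentially the same route as the paper: the bound comes from Proposition~(\ref{rhobound}) applied to the three unsplit (hence quasi-unsplit) families, a covering family is extracted because the loci are closed and their union must be all of $X$, and then the rc$(\li^1)$- and rc$(\li^1,\li^2)$-fibrations have positive-dimensional targets by Proposition~(\ref{rhobound}), forcing one remaining family at each stage to be horizontal and dominating since general fibers must be joined by chains of the remaining families. Your write-up merely makes explicit a few points the paper leaves implicit (closedness of $\cloc_m$, and horizontality via numerical independence together with Lemma~(\ref{numcon})), but the argument is the same.
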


\begin{proof} The first assertion follows from Proposition (\ref{rhobound}).\\ 
Assume now that $\rho_X=3$; since $X$ is  rc$(\li^1,\li^2,\li^3)$-connected at least one of the families,
say $\li^1$, is covering.\\ Let $\pi_1\colon \rc{X}{Z^1}$ be the rc$(\li^1)$-fibration; since $\rho_X = 3$, by Proposition (\ref{rhobound}), we have 
$\dim Z^1 >0$. Two general fibers of $\pi^1$ are connected by chains of curves in $\li^2$ and $\li^3$,
so one of the families, say $\li^2$, is horizontal and dominating w.r. to $\pi^1$.\\
Let $\pi_2\colon \rc{X}{Z^2}$ be the rc$(\li^1,\li^2)$-fibration; since $\rho_X = 3$, by Proposition (\ref{rhobound}), we have  $\dim Z^2 >0$. Two general fibers of $\pi^2$ are connected by chains of curves parametrized by $\li^3$,
so  $\li^3$ is horizontal and dominating  w.r. to $\pi^2$.
\end{proof}

We show now that for a manifold $(X,H)$, which is RCC with respect to a family $V$ being  covered by $V$-triplets of lines is indeed equivalent to being connected
by $V$-triplets of lines:

\begin{proposition}\label{3l}
Assume that $(X,H)$ is RCC-connected by a family $V$.
Then $(X,H)$ is covered by $V$-triplets of lines if and only if $(X,H)$ is connected by $V$-triplets of lines.
\end{proposition}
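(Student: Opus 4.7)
The plan is to prove the two implications separately.

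For \textbf{Connected implies Covered}, the approach is constructive. Since $X$ is rc$(\li^1,\li^2,\li^3)$-connected with $[V]=[\li^1]+[\li^2]+[\li^3]$, Proposition~(\ref{3fam}) reorders the families (when $\rho_X=3$) so that $\li^1$ is covering, $\li^2$ is horizontal-dominating with respect to the rc$(\li^1)$-fibration, and $\li^3$ is horizontal-dominating with respect to the rc$(\li^1,\li^2)$-fibration. Through a general point $x$ I would pick $\ell^1\in\li^1$ through $x$, then apply Lemma~(\ref{locy}) with $Y=\ell^1$ and $V^1=\li^2$ --- whose numerical independence hypothesis is satisfied because $\rho_X=3$ --- to find $\ell^2\in\li^2$ meeting $\ell^1$; analogously attach $\ell^3\in\li^3$ meeting $\ell^2$. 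The resulting connected cycle has numerical class $[\li^1]+[\li^2]+[\li^3]=[V]$. When $\rho_X<3$ the three classes are numerically dependent and a direct analysis shows that enough of the families are covering to perform the construction immediately.

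For \textbf{Covered implies Connected}, I would first use the finiteness of the irreducible components of $\ratcurves$ of $H$-degree one: by shrinking to a dense open subset of $X$ I may assume that the three line components of the triplet through a general $x$ lie in three fixed families $\li^1,\li^2,\li^3$, so that $[V]=[\li^1]+[\li^2]+[\li^3]$. The rc-connectedness of $X$ is then established by contradiction using the rc$(\li^1,\li^2,\li^3)$-fibration $\pi\colon X^0 \to Z$. Since each line in $\li^i$ lies in a fiber of $\pi$, the classes $[\li^i]$ vanish under the pushforward to $\cycl(Z)$ (well-defined after resolving indeterminacy), and hence $\pi_{*}[V]=0$. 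On the other hand, by the RCC hypothesis two general points $x,y\in X^0$ --- taken in distinct fibers of $\pi$ if $\dim Z>0$ --- are joined by an irreducible curve $C\in V$ with $[C]=[V]$. Irreducibility of $C$ forces $\pi(C)$ to be a point, since its class in $\cycl(Z)$ is zero, placing $x,y$ in the same fiber, a contradiction. Hence $\dim Z=0$ and $X$ is rc$(\li^1,\li^2,\li^3)$-connected.

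The main difficulty I expect lies in the \emph{Connected implies Covered} direction: producing a genuine three-line chain through the general $x$, as opposed to the longer chain that rc-connectedness guarantees a priori. The horizontal-dominance hypothesis of Proposition~(\ref{3fam}) must be combined carefully with the dimension estimate of Lemma~(\ref{locy}) to ensure each attachment --- specifically that a generic $\li^i$-line through $x$ (and not merely the whole rc-class of $x$) meets $\loc(\li^{i+1})$ for the successive values of $i$.
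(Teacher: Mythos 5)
The ``covered $\Rightarrow$ connected'' half of your proposal is essentially the paper's argument: finitely many triples of line families with classes summing to $[V]$, so one of the associated (closed) loci is all of $X$, and then the rc$(\li^1,\li^2,\li^3)$-fibration contracts the covering chains, hence curves of $V$, hence has zero-dimensional target. Your phrasing via a pushforward $\pi_*$ needs the usual care: $\pi$ is only defined on an open set, and if some $\li^i$ is not covering its \emph{entire} locus may sit inside the indeterminacy locus, so ``each line of $\li^i$ lies in a fiber'' is not literally true; the clean version is the divisor argument (the closure of the pullback of an ample divisor is trivial on the general chain, hence on $[V]$, contradicting that $V$-curves join points in distinct general fibers). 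This is a repairable technicality.

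The ``connected $\Rightarrow$ covered'' half, however, has a genuine gap, and it is exactly the point you flag at the end without resolving. Lemma (\ref{locy}) only gives a dimension bound \emph{conditional} on non-emptiness; it cannot produce a line $\ell^2\in\li^2$ meeting a chosen $\ell^1\ni x$. When $\li^2$ (or $\li^3$) is not covering, $\loc(\li^2)$ is a proper closed subset, and a general $\li^1$-line through a general point need not meet it at all (for instance $\loc(\li^2)$ could be a divisor trivial on $\li^1$, or have codimension $\ge 2$). Horizontal dominance with respect to the rc$(\li^1)$-fibration only guarantees that $\loc(\li^2)$ meets the general rc$(\li^1)$-\emph{class}, which is in general swept out by long chains of $\li^1$-lines, so this yields a long chain through $x$, not a triplet $\ell^1+\ell^2+\ell^3$. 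Moreover, Proposition (\ref{3fam}) gives the ordering you invoke only when $\rho_X=3$, and your claim that for $\rho_X<3$ ``enough of the families are covering'' is unsupported: with $\rho_X\le 2$ nothing prevents two of the three families from being non-covering. The paper closes precisely this gap by a different mechanism, valid for any $\rho_X$: assuming $\li^3$ not covering, it bounds the general fiber of the rc$(\li^1,\li^2)$-fibration from below and combines the resulting inequality (\ref{f1}) with the lower bound (\ref{f2}) from Proposition (\ref{iowifam}); this forces either $\dim\loc(\li^3)_y=-K_X\cdot\li^3+1$, so that $\loc(\li^3)_y$ dominates $Z^2$ and $\loc(\li^3,\li^1,\li^2)_y=X$ (here Lemma (\ref{locy}) applies because non-emptiness is now guaranteed), or $\loc(\li^3)$ is a divisor $D_3$ nontrivial on the fibers of $\pi^2$ while $\loc(\li^2)$ is either $X$ or a divisor positive on $\li^1$; then every point of $X$ lies on a connected cycle $\ell^1\cup\ell^2$ on which $D_3$ is positive, and intersecting with $D_3$ attaches the third line. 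Without an argument of this kind (dimension counts plus divisor positivity, rather than a pointwise application of Lemma (\ref{locy}) starting from a single line), your construction does not go through.
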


\begin{proof}
Consider the set of  triplets of families of lines whose numerical classes add up to $[V]$: $\mathcal S = \{(\li_i^1, \li^2_i,\li^3_i)\, |\, [\li^1_i] +[\li^2_i]+[\li^3_i] =[V] \}_{i = 1, \dots, k}$.\\
For every $i= 1, \dots, k$
denote by $B_i$ the set of points which are
contained in a connected chain $\ell^1 \cup \ell^2 \cup \ell^3$, with $\ell^j$ is parametrized by 
$\El_i^j$ and $\ell^j \cap \ell^{j+1} \not = \emptyset$ for $j=1,2$.
The set $B_i$ can be written as the union of three closed subset: 
\begin{enumerate}
\item $B_i^1:=\loc(\El_i^2,\El_i^1)_{\loc(\El_i^3)}$,
 \item $B_i^2:= e_2(p_2^{-1}(p_2(e_2^{-1}(\loc(\El_i^1))) \cap p_2(e_2^{-1}(\loc(\El_i^3)))))$,
 \item$B_i^3:=\loc(\El_i^2,\El_i^3)_{\loc(\El_i^1)}$.
 \end{enumerate}
where $e_2$ and $p_2$ are the (proper) morphisms defined on the universal family over $\li^2_i$ appearing in the
fundamental diagram
$$
\xymatrix{\Un^2_i  \ar[r]^(.50){e_2} \ar[d]_{p_2} & X\\
 \li^2_i & & }
$$
Notice that the (closed) set $B_i^j$ is exactly
the set of points on curves parametrized by $\El_i^j$ belonging to the chains.\par
\medskip
If through the general point of $X$ there is a $V$-triplet of lines,
then $X$ is contained in the union of the $B_i^j$; since the $B_i^j$ are a finite number
and each of them is closed there is a pair of indexes $(i_0,j_0)$ such that $X$ is contained
in $B_0:= B_{i_0}^{j_0}$.\\ 
By construction the set $B_i^j$ is contained in $\loc(\El_i^j)$, therefore the family $\El_{i_0}^{j_0}$ is covering.\par
\medskip
To simplify notation we denote from now on by $\El^{1}, \El^{2}$ and $\El^{3}$ the
families corresponding to the index $i_0$. We also assume that $j_0=1$ - we don't lose in generality, even if the sets $B_i^j$ have not the same definition for different $j$'s.  \\
Let us consider the rc$(\li^1)$-fibration $\pi^1: \rc{X}{Z^1}$: if $\dim Z^1=0$ then $X$ is rc$(\li^1)$-connected and the statement follows;
otherwise we claim that either $\El^2$ or $\El^3$ is horizontal and dominating
with respect to $\pi^1$.\\
To prove the claim recall that $X$ is covered by connected cycles $\ell^1 + \ell^2 + \ell^3$,
and observe that these cycles are not contracted by $\pi^1$, otherwise also curves
parametrized by $V$ would be contracted, and $Z^1$ should be a point.
Therefore a general fiber of $\pi^1$ meets a cycle $\ell^2 + \ell^3$ and does not contain it, so 
the claim follows.\\
Assume that the family which is  horizontal and dominating with respect to $\pi^1$ is $\El^2$ and consider
the rc$(\El^1,\El^2)$-fibration $\pi^2: \rc{X}{Z^2}$.  If $\dim Z^2=0$ then  $X$ is rc$(\li^1,\li^2)$-connected and the statement follows;
otherwise we can prove, arguing as above, that $\El^3$ is horizontal and dominating
with respect to $\pi^2$.\\
We can thus consider the rc$(\El^1,\El^2,\El^3)$-fibration $\pi^3: \rc{X}{Z^3}$; this fibration contracts the cycles $\ell^1 + \ell^2 + \ell^3$, hence contracts curves parametrized by $V$:
it follows that $\dim Z^3=0$, i.e., $X$ is rc$(\li^1,\li^2,\li^3)$-connected.\par
\medskip
Now we suppose that $(X,H)$ is connected
by $V$-triplets of lines,  i.e. that there exist three families of lines $\El^1,\li^2,\li^3$ with $[V]=[\li^1]+[\li^2]+[\li^3]$ such that $X$ is  rc$(\li^1,\li^2,\li^3)$-connected; we want to prove that $(X,H)$ is covered by $V$-triplets of lines.\\
If all the families of lines are covering, the statement is clear, so we can assume that $\li^3$ is not. 
Let $\pi^2: \rc{X}{Z^2}$ be the rc$(\El^1,\El^2)$-fibration; by Proposition (\ref{iowifam}) its general fiber $F$ has dimension 
$$\dim F \ge -K_X \cdot \li^1 + \dim \loc(\li^2)_x- 1,$$
 for $x$ general in $\loc(\li^2)$; it follows that, for $y$ general in $\loc(\li^3)$: 
\begin{equation}\label{f1}
\dim \loc(\li^3)_y + \dim \loc(\li^2)_x \le n +K_X \cdot \li^1+1 \le -K_X \cdot (\li^2+\li^3);
\end{equation}
on the other hand, by Proposition (\ref{iowifam}) we have 
\begin{equation}\label{f2}
\dim \loc(\li^3)_y + \dim \loc(\li^2)_x \ge -K_X \cdot (\li^2+\li^3) -2.
\end{equation}
Therefore, recalling that $\li^3$ is not covering we have that either $\dim \loc(\li^3)_y = -K_X \cdot \li^3 +1$ or $-K_X \cdot \li^3$.
In the former case $\loc(\li^3)_y$ dominates $Z$, hence 
 $\loc(\li^3,\li^1,\li^2)_y$ is not empty and, by Lemma (\ref{locy}) its dimension is $n$.\\
Otherwise $\dim \loc(\li^3)_x = -K_X \cdot \li^3$ and, by Proposition (\ref{iowifam}), $\loc(\li^3)$ is a divisor $D_3$; since $\li^3$ is horizontal with respect to $\pi^2$, then
$D_3$ is not trivial on the fibers of $\pi^2$.\\
By formulas (\ref{f1}) and (\ref{f2}) and Proposition (\ref{iowifam})  $\loc(\li^2)$ is either $X$ or a divisor $D_2$ (and in this case
clearly $D_2 \cdot \li^1 >0$). In both cases through every point of $X$ there is a connected cycle consisting of a line in $\li^1$ and a line in $\li^2$. The divisor $D_3$ is positive on this cycle hence we get the required triplet of $V$-lines.
\end{proof}

\subsection{RCC-manifolds connected by reducible cubics}\par
\medskip
The next situation we are going to consider is again related to the presence of many
reducible cycles, i.e., we will consider RCC-manifolds such that through two general
points there is a reducible cycle parametrized by the closure of the connecting family.
It turns out that the only such manifolds which are not covered by $V$-triplets of lines
are products of two projective spaces polarized by $\Ol(1,2)$.

\begin{proposition}\label{connred}
Assume that $(X,H)$ is RCC-connected by a family $V$, that, given two general points ${x,x'\in X}$, there exists a reducible cycle parametrized by $\mathbf{\V}$ passing through $x$ and $x'$ and that $X$ is not covered by $V$-triplets of lines.\\ Then $(X,H) \simeq (\proj^t \times \proj^{n-t}, \Ol(1,2))$.
\end{proposition}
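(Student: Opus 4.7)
The plan is to identify a line-plus-conic substructure inside $\V$ and then use it to pin down the Mori cone of $X$ and its extremal contractions, eventually identifying $X$ as a product of two projective spaces.

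By hypothesis $X$ is not covered by $V$-triplets of lines, so the reducible cycles in $\V$ connecting two general points $x,x'$ cannot split as three lines, and must take the form $\ell + C$ with $\ell$ a line, $C$ a conic, and $\ell \cap C \neq \emptyset$. By genericity this singles out an unsplit covering family $\El$ of lines and a covering Chow family $\C$ of conics satisfying $[V] = [\El] + [\C]$. A case analysis on degenerations --- ruling out reducible conics in $\C$ with components not proportional to $[\C]$, since each such would produce, together with an $\El$-line through the intersection point, a $V$-triplet of lines through the general pair $(x,x')$ and hence cover $X$ by triplets --- shows that $\C$ is quasi-unsplit.

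Granted this, $X$ is rc$(\El,\C)$-connected. Applying Lemma~\ref{numcon} with $Z = \loc(\El)_x$ and travelling along $\C$, every curve in $X$ has class $\gamma[\El] + \mu[\C]$ with $\gamma \ge 0$ (using Corollary~\ref{numcor} to see that curves in $Z$ are multiples of $[\El]$); the symmetric application gives $\mu \ge 0$. Hence $\overline{\cone(X)} = \mathbb R_{\ge 0}[\El] + \mathbb R_{\ge 0}[\C]$, so $\rho_X = 2$ and both classes are extremal. Let $\f_1: X \to Y_1$ and $\f_2: X \to Y_2$ be the extremal contractions of $[\El]$ and $[\C]$; both are of fiber type since $\El, \C$ cover $X$, and since no nonzero class lies in both rays, the combined morphism $(\f_1,\f_2): X \to Y_1 \times Y_2$ is finite.

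A general fiber $F_1$ of $\f_1$ is rationally chain connected by $\El$-lines. Fixing $x \in F_1$ and letting $x'$ range over a dense open of $X$, the intersection point $\ell \cap C$ lies in $F_1$ and, by a dimension count on the incidence variety of connecting cycles, traces a dense subset of $F_1$; hence two general points of $F_1$ are joined by an $\El$-line. With $\rho(F_1) = 1$, the classical characterization of $\proj^n$ via two-point connecting lines yields $F_1 \simeq \proj^t$ with $H|_{F_1} = \Ol(1)$. An analogous analysis for $F_2$ --- a conic-connected fiber of Picard number one, subject to the Ionescu--Russo classification \cite{IoRu} together with an integrality constraint on $H$-lines excluding the $\proj^n$-with-$\Ol(1)$ and quadric cases --- gives $F_2 \simeq \proj^{n-t}$ with $H|_{F_2} = \Ol(2)$. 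Transferring the connecting structure to the bases via the finite restrictions $\f_j|_{F_i}$ identifies $Y_1 \simeq \proj^{n-t}$ and $Y_2 \simeq \proj^t$; combined with the finiteness of $(\f_1,\f_2)$, this forces $X \simeq \proj^t \times \proj^{n-t}$ with $H = \Ol(1,2)$.

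The hardest step will be the dimension-theoretic claim that the intersection points $\ell \cap C$ sweep a dense subset of $F_1$ as $x'$ varies --- this is what bridges the Mori cone analysis to the projective-space characterization, and requires a careful count on the Chow incidence variety of connecting cycles, followed by a symmetric descent to the bases $Y_1$ and $Y_2$.
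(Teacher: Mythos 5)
Your reduction to a dominant line-plus-conic pair $(\El,\C)$ with $[\El]+[\C]=[V]$ and the resulting computation $\overline{\cone(X)}=\mathbb R_{\ge 0}[\El]+\mathbb R_{\ge 0}[\C]$ run parallel to the first half of the paper's proof, but one step there is misjustified: quasi-unsplitness of $\C$. Your argument only controls degenerations occurring on cycles through a general pair $(x,x')$, whereas quasi-unsplitness is a condition on \emph{every} cycle parametrized by $\C$, including reducible conics supported over a proper closed subset of $X$; such cycles pass through no general point, so they produce no $V$-triplet and cannot be excluded this way. What the hypothesis actually yields (and what the paper proves and uses) is weaker: for general $x$, $C_y$ is proper for every $y\in\loc(\El)_x$, so the conics entering the one-step loci $\loc(\El,\C)_x=\loc(\C,\El)_x=X$ are irreducible and Lemma~\ref{numcon} and Corollary~\ref{numcor} apply; your version, which runs chains of $\C$ starting from $\loc(\El)_x$ under an unproved quasi-unsplitness assumption, is not justified as written (the equalities $X=\loc(\El,\C)_x=\loc(\C,\El)_x$ themselves require the properness/closedness argument on the chain spaces, which you gloss over with ``by genericity'').

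The endgame is where the proposal genuinely breaks down. (i) The identification of a general fiber $F_2$ of the conic contraction via \cite{IoRu} is not available: that classification concerns embedded (very amply polarized) manifolds, while $H$ is only ample here, and in the Picard number one case conic-connectedness does not reduce to the short list you invoke (it includes many prime Fano manifolds, e.g.\ Grassmannians in the Pl\"ucker embedding), so no ``integrality constraint on $H$-lines'' pins $F_2$ down to $(\proj^{n-t},\Ol(2))$. The correct tool, used in the paper, is Kebekus' characterization \cite{Kesing}, which applies because the general fiber equals $\loc(C)_x$, i.e.\ one point is joined to \emph{every} point of the fiber by an irreducible conic. (ii) The density claim for $F_1$, which you yourself flag as the hardest step, is left unproved; the paper never needs it, since equidimensionality of both contractions follows from $-K_X\cdot(\li+C)=n+2$ together with $\dim\loc(\li)_x+\dim\loc(\C)_x=n$, after which Fujita's lemma \cite{Fuj4} (using $H\cdot\li=1$) shows at once that $\f_{\li}$ is a $\proj$-bundle, $X=\proj(\E)$. (iii) Even granting that fibers and bases are projective spaces, finiteness of $(\f_1,\f_2)$ onto $\proj^t\times\proj^{n-t}$ does not force an isomorphism: a finite surjective morphism onto such a product can have degree greater than one, and you give no argument that the degree is one. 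The paper circumvents all of this by restricting $\E$ to lines $l$ of the base $Z$ (a projective space by \cite{Laz}): since $\proj(\E_{|l})$ maps via $\f_{\C}$ to a lower-dimensional variety, $\E$ is uniform of constant splitting type, hence splits, giving $X\simeq\proj^t\times\proj^{n-t}$ and $H=\Ol(1,2)$. To repair your proof you would need to supply (ii), replace (i) by the Kebekus argument, and add a degree-one (or bundle-splitting) argument for (iii).
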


\begin{proof}
In view of Proposition (\ref{3l}) we can assume that through a general point there is no reducible cycle parametrized by $\V$ consisting of three lines, hence we can assume that  through two general points there exists a reducible cycle $\ell +  \gamma$ parametrized by $\mathbf{\V}$  consisting of a line and a conic.\\
Consider the pairs $\{(\El^j,C^j)\}_{j=1 \dots, k}$, where $\El^j$ is a family of lines, $C^j$ is a family
of conics with $[\El^j]+[C^j]=[V]$ and let $\mathcal C^j$ be the Chow family associated to $C^j$, with universal family
$\mathcal U_{\mathcal C^j}$.\\
Define, as in \cite[IV.4]{Kob}, $\ch_1(\El^j) = \Un_{\El^j} \times_{\El^j} \Un_{\El^j}$ and 
$\ch_1(\C^j) = \Un_{\C^j} \times_{\C^j} \Un_{\C^j}$.
Set $Y^1_j := \ch_1(\El^j) \times_X \ch_1(\C^j)$ and  $Y_j^2:= \ch_1(\C^j) \times_X \ch_1(\El^j)) $;
our assumptions can be restated by saying that the natural morphism
$$\text{ev}\colon \bigcup^k_{j=1} \;( Y^1_j \cup Y_j^2 )\longrightarrow X \times X$$
is dominant.
Since, for every $j$, the image of $Y_j=Y^1_j \cup Y^2_j$ in $X \times X$ is
closed, the there exists an index $j_0$ such that
$\text{ev}_{|Y_{j_0}}: Y_{ j_0}  \to X \times X$ is surjective. From now on we consider all objects corresponding to this index $j_0$ and we omit it.\\
Denote by $\text{ev}^1$ and $\text{ev}^2$ the restrictions of $\text{ev}$ to $Y^1$ and $Y^2$. The morphism 
$\text{ev}^1$  is the composition of  $\text{ev}^2$  with the involution exchanging the factors of $X \times X$, hence both $\text{ev}^1$ and $\text{ev}^2$ are surjective.\\
For $(x,x')$ to be in the image of $ \text{ev}^1$ (respectively $\text{ev}^2$) means that there is a cycle $\ell + \gamma$ with $\ell$ and $\gamma$ parametrized by $\El$ and $\C$ such that $x \in \ell$ and $x' \in \gamma$ (respectively $x \in \gamma$ and $x' \in \ell$). So, by the surjectivity of $\text{ev}^1$ and $\text{ev}^2$, for every $x \in X$
$$
X=  \loc(\El,\C)_x = \loc(\C,\El)_x.
$$
It follows that both $\li$ and $\C$ are covering and that, for a general $x \in X$, 
$$\dim \loc(\li)_x + \dim \loc(\C)_x = n. $$
For a general $x \in X$ we have that $C_y$ is proper for any point $y \in \loc(\El_x)$; in fact, if this were not the case, then  through $x$ there would be a reducible cycle with numerical
class $[V]$, consisting of three lines, contradicting our assumptions: in particular $C$ is locally unsplit.\\
Applying twice Lemma (\ref{numcon}) we get that  $\cone(X)= \twospan{[\El]}{[C]}$.
By Remark (\ref{iowiug}) we have 
$$-K_X \cdot (\li + C) = \dim \loc(\li)_x + \dim \loc(\C)_x +2 = n+2;$$
it follows that both the extremal contractions of $X$ are equidimensional.\\
 Let $\f_{\li}:X \to Z$ be the contraction of
the ray $\mathbb R_+[\li]$; since $H \cdot \li =1$ we can apply \cite[Lemma 2.12]{Fuj4} to get that
$\f_{\li}$ gives to $X$ a structure of $\proj$-bundle over $Z$: more precisely $X=\proj(\E:={\f_{\li}}_*H)$.\\
A general fiber $F$ of the contraction $\f_{\C}$ is $\loc(C)_x$ for some $x \in F$; we can apply \cite[Theorem 3.6]{Kesing} to get that $(F,H_{|F})$ is $(\proj^{n-t}, \Ol(1))$; therefore by \cite[Theorem 4.1]{Laz} also $Z$ is a projective space.\\
Let $l$ be any line in $Z$; consider $X_l:= \f_{\li}^{-1}(l)=\proj_l(\E_{|l})$; the image of $X_l$ via $\f_{\C}$ has dimension
smaller than $X_l$; the only vector bundle on $\pu$ such that its projectivization has a map (different by the projection onto $\pu$) to a smaller dimensional variety, is the trivial one (and its twists). Therefore $\E$ is uniform of splitting type $(a,a, \dots a)$, hence $\E$ splits.\\
It follows that $X$ is a product of projective spaces, that $C$ is the family of lines in one of the factors
and that $H = \Ol(1,2)$.
\end{proof}

\begin{corollary}\label{n+2} Let $(X,H)$ be RCC with respect to a family $V$ and assume that  $\rho_X >1$. 
If $V$ is not generically unsplit then either $X$ is  covered by $V$-triplets of lines or $(X,H) \simeq (\proj^t \times \proj^{n-t}, \Ol(1,2))$.
\end{corollary}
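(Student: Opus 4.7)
The plan is to reduce to Proposition~\ref{connred} by checking its main additional hypothesis, namely that two general points of $X$ lie on a reducible cycle of the Chow family $\V$.

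The first step is to extract a positive-dimensional family of curves in $V$ through two general points. Since $V$ is a connecting family, the double-evaluation map $\Pi\colon V \to X \times X$ is dominant, and the assumption that $V$ is not generically unsplit means that the general fibre of $\Pi$ has positive dimension. Hence, for a general pair $(x,x') \in X \times X$, there is a positive-dimensional subvariety $T \subset V$ whose points parametrize irreducible rational curves of $H$-degree $3$ passing through both $x$ and $x'$.

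The second step is the degeneration. I look at the closure $\overline{T}$ of $T$ in the proper family $\V$: every cycle it parametrizes still contains $x$ and $x'$, and the key point to establish is that $\overline{T} \setminus T$ is non-empty. Otherwise $T$ itself would be a proper positive-dimensional family of irreducible rational curves through two fixed points, which contradicts Mori's bend-and-break with two marked points (cf.~\cite[II.5]{Kob}). Any point in $\overline{T} \setminus T$ then parametrizes a reducible (or non-reduced) cycle of $\V$ through $(x,x')$, which is exactly the extra input needed to apply Proposition~\ref{connred} and obtain the stated dichotomy; the hypothesis $\rho_X > 1$ enters only implicitly, through Proposition~\ref{connred}, to rule out the exceptional case $(\proj^n, \Ol(3))$, in which $V$ trivially fails to be generically unsplit.

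The main obstacle I anticipate is the bend-and-break step. One must ensure that the positive-dimensional subvariety $T \subset V$ genuinely lifts to a positive-dimensional family of morphisms $\pu \to X$ with marked points going to $x$ and $x'$, so that Mori's hypothesis is met: this follows from the fact that $V$ is obtained from $\om(\pu, X)$ by quotienting by $\Aut(\pu)$, which is three-dimensional, whereas fixing two marked points kills only a one-dimensional stabiliser. One should also check that the limits in $\overline{T} \setminus T$ really are reducible or non-reduced cycles (rather than the same irreducible curve viewed twice), which holds essentially by construction, since $V$ was embedded in $\V$ as the open subset of irreducible, multiplicity-one rational $1$-cycles.
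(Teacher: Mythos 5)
Your argument is exactly the paper's: the paper proves Corollary~(\ref{n+2}) by observing that a non-generically-unsplit connecting family has positive-dimensional fibres of the double-evaluation map over general pairs of points, so Mori's bend-and-break produces a reducible (or non-reduced, hence still a triplet of lines or line plus conic) cycle of $\V$ through two general points, and then Proposition~(\ref{connred}) gives the dichotomy --- your write-up just makes the degeneration step explicit. The only inaccuracy is your closing aside: for $(\proj^n,\Ol_{\proj}(3))$ the family of lines \emph{is} generically unsplit (two general points lie on a unique line), so the hypothesis $\rho_X>1$ is not there to exclude that case; but this remark plays no role in the argument, which is correct as it stands.
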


\begin{proof}
The assertion follows from Mori Bend and Break Lemma; in fact, if $V$ is not generically unsplit then  through two general points ${x,x'\in X}$, there is a reducible cycle in $\mathbf{\V}$.
\end{proof}

\section{RCC manifolds covered by lines: Picard number}

In this section we are going to prove Theorem (\ref{main1}):

\begin{theorem*} Let $(X,H)$ be RCC with respect to a family $V$, and assume that $(X,H)$
is covered by lines. Then $\rho_X \le 3$, and if equality holds then there exist three families of rational curves of $H$-degree one, $\El^1,\li^2,\li^3$ with $[V]=[\li^1]+[\li^2]+[\li^3]$, such that $X$ is  rc$(\li^1,\li^2,\li^3)$-connected.\end{theorem*}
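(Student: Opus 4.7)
The plan is to split on whether the family $V$ is generically unsplit and to reduce each branch to the structural results of Section~5.

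First, suppose $V$ is not generically unsplit. Then Corollary~(\ref{n+2}) immediately gives a dichotomy: either $(X,H)\simeq(\proj^t\times\proj^{n-t},\Ol(1,2))$, which has $\rho_X=2$ and makes the equality assertion vacuous, or $X$ is covered by $V$-triplets of lines. In the latter case Proposition~(\ref{3l}) upgrades ``covered'' to ``connected'' by $V$-triplets of lines, and Proposition~(\ref{3fam}) then yields $\rho_X\le 3$ together with the three line families $\El^1,\El^2,\El^3$ with $[V]=[\El^1]+[\El^2]+[\El^3]$ and $X$ rc$(\El^1,\El^2,\El^3)$-connected whenever equality holds.

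Now suppose $V$ is generically unsplit. By Remark~(\ref{iowiug}) one has $\loc(V)_x=X$ and $-K_X\cdot V=n+1$ for a general $x$. If $V$ is moreover locally unsplit, Corollary~(\ref{numcor})(a) forces $\cycx{X}=\langle[V]\rangle$, so $\rho_X=1$ and we are done. Otherwise $V_x$ is not proper for a general $x$, so through every general $x$ there is a reducible cycle of $\V$; since $H\cdot V=3$, this cycle is either three lines (Type~I) or a line plus an irreducible conic (Type~II). Because there are finitely many families of lines and of conics on $X$ and the loci where each type appears are constructible, one of two situations must hold for a general $x$: (i)~a Type~I reducible $\V$-cycle passes through $x$, in which case $X$ is covered by $V$-triplets of lines and the first paragraph concludes; or (ii)~every reducible $\V$-cycle through a general point is of Type~II, concentrated in a dominant pair $(\El^*,\C^*)$.

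Situation~(ii) is the main obstacle and is exactly where the hypothesis ``covered by lines'' becomes essential. First observe that $\C^*$ must itself be unsplit, since any further degeneration of a $\C^*$-conic into two lines would, together with its paired line $\El^*$, produce a Type~I cycle through a general point, contradicting~(ii). Applying Lemma~(\ref{numeq}) with $Y=\{x\}$ and the covering Chow family $\V$, every curve class on $X$ is a rational combination of classes of components of $\V$-cycles through $x$; in situation~(ii) these are only $[V]$, $[\El^{(i)}]$, and $[\C^{(i)}]=[V]-[\El^{(i)}]$ for the pair types $(\El^{(i)},\C^{(i)})$ through $x$, so $\cycx{X}\subseteq\langle[V]\rangle+\sum_i\langle[\El^{(i)}]\rangle$. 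I would then introduce the covering family of lines $\El^0$ provided by the hypothesis and, iterating the rc$(\El^0)$- and rc$(\El^0,\El^*)$-fibrations along the lines of the proof of Proposition~(\ref{3fam}) and invoking the dimension estimate of Lemma~(\ref{locy}), bound the number of numerically independent line classes that can appear by~$3$. The final and most delicate step is to show that when $\rho_X=3$ holds in situation~(ii) the three line families so produced can be chained into a genuine $V$-triplet of lines, effectively reducing back to situation~(i) and producing the required rc$(\El^1,\El^2,\El^3)$-connectedness.
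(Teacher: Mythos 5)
Your first branch (when $V$ is not generically unsplit) is fine and coincides with the paper's reduction via Corollary~(\ref{n+2}), Proposition~(\ref{3l}) and Proposition~(\ref{3fam}). The genuine gap is in what you call situation~(ii), which is exactly the case the paper isolates in~(\ref{quasi}) and which occupies the whole of its Section~6: there the paper proves that a generically unsplit RCC-manifold covered by lines which is \emph{not} connected by $V$-triplets has $\rho_X\le 2$, so that $\rho_X=3$ can only arise from the triplet branch. Your two decisive steps --- ``bound the number of numerically independent line classes by $3$'' and ``when $\rho_X=3$ chain the three line families into a genuine $V$-triplet'' --- are precisely this content, and you give no argument for either; invoking ``the rc$(\El^0)$- and rc$(\El^0,\El^*)$-fibrations and Lemma~(\ref{locy})'' is a plan, not a proof. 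The paper's argument here is a substantial case analysis: it takes a \emph{maximal numerically independent} collection of pairs $(\El^i,C^i)$ (so $\rho_X=k+1$), splits according to whether some line family in the pairs is covering, produces divisors $E_i=\loc(C^i,\El^i)_x$ resp.\ $D_x$ and plays their intersection numbers against the covering family of lines, and in the residual case $n=3$ uses the extremality Lemmas~(\ref{face}) and~(\ref{face2}) to force a $V$-triplet and a contradiction. None of these ideas appear in your sketch, and nothing in your outline explains why $k$ (hence $\rho_X$) is bounded, nor how a $\rho_X=3$ configuration of type~(ii) could be ``re-chained'' into triplets.

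Two further points would need repair even within your outline. First, your claim that $\C^*$ is unsplit does not follow from the one-line argument you give: a degenerate member $\ell'+\ell''$ of $\C^*$ need not be attached to an $\El^*$-line through a general point, since the degenerate conics form a closed subset of the family which the cycles through general points may avoid. The paper only obtains the weaker statement that a \emph{dominating} conic family in a pair is \emph{locally} unsplit (Claim~(\ref{newtriplets})), and even that requires the fibration argument modelled on Proposition~(\ref{3l}), not an attachment argument. Second, reducing to a single ``dominant pair'' $(\El^*,\C^*)$ is unjustified: through a general point there may be reducible cycles belonging to several numerically independent pairs, and ruling out the coexistence of many such pairs (e.g.\ the pair $(\ol,\oc)$ in the paper's Case~2) is where most of the work lies. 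As written, the proposal establishes the theorem only in the branch already covered by Corollary~(\ref{n+2}) and Propositions~(\ref{3l})--(\ref{3fam}).
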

 
In view of Corollary (\ref{n+2}) we can confine to the following situation:
\begin{statement}\label{quasi}
$(X,H)$ is a RCC-manifold with respect to a generically unsplit family $V$, covered by lines and not
connected by  $V$-triplets of lines.
\end{statement}
We will show that, in this setting, we have $\rho_X \le 2$, so we assume,
by contradiction that $\rho_X \ge 3$.\par
\medskip
Since $V$ is generically unsplit, by  \cite[Corollary IV.2.9]{Kob}, we have that 
\begin{equation}
-K_X \cdot V = n+1.
\end{equation}
Consider the set $\mathcal B'=\{(\El^i, C^i)\}$ of pairs of families $(\El^i, C^i)$ such that  through a general point $x \in X$ there  is a reducible cycle $\ell +  \gamma$, with $\ell$ and $\gamma$ parametrized respectively by $\El^i$ and $C^i$. \\
Let $\mathcal B=\{(\El^i, C^i)\}_{i=1}^k$ be a maximal set of pairs in $\mathcal B'$ 
such that $[V],  [\El^1], \dots  [\El^k]$ are numerically independent; if one of the family of lines in the pairs belonging to $\mathcal B'$ is covering, we choose it to be $\li^1$. Denote by $\Pi_i$ the two-dimensional vector subspace of
$\cycl(X)$ spanned by $[V]$ and $[\li^i]$.
By Lemma (\ref{numeq}) we have 
$$\cycl(X) = \langle [V], [\El^1],[C^1], \dots, [\El^k],[C^k] \rangle= \langle [V], [\El^1], [\El^2], \dots,[\El^k] \rangle,$$
 hence the Picard number of $X$ is $k+1$.

\begin{claim}\label{newtriplets}
Let $(\li,C)$ be a pair in $\mathcal B$. If $C$ is a dominating family then it is locally unsplit.
\end{claim}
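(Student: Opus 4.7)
The plan is to prove this by contradiction: if $C$ is dominating but not locally unsplit, I will exhibit $V$-triplets of lines through the general point of $X$, so that Proposition \ref{3l} forces $X$ to be connected by $V$-triplets of lines, contradicting our standing hypothesis in \ref{quasi}.

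The first step is to locate two further families of lines from the non-properness of $C$. For general $y \in \loc(C)$ the fiber $C_y$ is not proper, so the Chow-fiber $\C_y$ contains a reducible limit cycle; since $H \cdot C = 2$, every such cycle splits as $\ell_1 + \ell_2$ with $\ell_i$ of $H$-degree one. Choosing an irreducible component of the splitting locus yields (possibly coincident) unsplit families of lines $D^1, D^2$ with $[D^1] + [D^2] = [C]$, and consequently $[\li] + [D^1] + [D^2] = [V]$; these will be our three candidate families.

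The crucial second step is to produce a connected triplet $\ell + \ell_1 + \ell_2 \in \V$ with $\ell \in \li$, $\ell_i \in D^i$ through a general $x \in X$. Take the reducible $V$-cycle $\ell + \gamma$ through $x$ guaranteed by $(\li, C) \in \mathcal B$, and set $y := \ell \cap \gamma$. As $x$ varies over a dense open of $X$, the point $y$ sweeps a dense subset of $\loc(C)$, so I may assume $y$ lies in the locus where $C_y$ fails to be proper. I would then pick a one-parameter family $\{\gamma_t\} \subset C_y$ whose Chow-limit is $\ell_1 + \ell_2$. Each $\ell + \gamma_t$ is a cycle in $\V$ (a deformation of $\ell + \gamma$ within the line-conic boundary stratum), and by the properness of $\V$ the limit $\ell + \ell_1 + \ell_2$ remains in $\V$. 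This limit is a connected chain of three lines passing through $x$, so $X$ is covered by $V$-triplets of lines; an application of Proposition \ref{3l} then completes the contradiction.

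The chief difficulty is justifying the existence of the one-parameter deformation $\{\gamma_t\}$ with the property that each $\ell + \gamma_t$ remains in $\V$: the conics in $C$ that pair with some line of $\li$ to form a reducible $V$-cycle constitute an a priori proper subfamily of $C$, and one must argue that the degenerations forced by the failure of local unsplitness occur inside this subfamily. I expect to address this via a dimension/irreducibility argument on the universal family of reducible $V$-cycles of type $\li + C$, leveraging the generality of $x$ (and hence of $y \in \loc(C)$) to ensure the required degeneration can be carried out in the right place and so yields a cycle still in $\V$.
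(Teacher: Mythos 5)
Your proposal and the paper both start from the same observation (failure of properness of $C_y$ produces degenerate cycles $\ell_1+\ell_2$ with $[\ell_1]+[\ell_2]=[C]$), but your route then diverges, and the step you yourself flag is a genuine gap: you need a one-parameter family $\{\gamma_t\}\subset C_y$ with every $\ell+\gamma_t$ parametrized by $\V$ and degenerating to $\ell+\ell_1+\ell_2$, and nothing in the hypotheses guarantees that the degenerations forced by the non-properness of $C_y$ take place inside the (a priori much smaller) subvariety of $\V$ consisting of cycles of type ``fixed line $\ell$ plus a conic through $y$''; the dimension/irreducibility argument you defer to is precisely the missing content. Note, however, that this requirement is self-imposed: being \emph{covered by $V$-triplets of lines} only asks for a connected rational $1$-cycle of three lines whose numerical classes sum to $[V]$ through a general point — membership in $\V$ is not required, and Proposition (\ref{3l}) uses nothing more. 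So you could drop the deformation inside $\V$ altogether: since $C$ is dominating and not locally unsplit, the locus of points lying on non-integral cycles of $\C$ is closed and dense, hence all of $X$, so through $y\in\ell\cap\gamma$ there is a connected pair $\ell_1+\ell_2$ with class $[C]$, and $\ell+\ell_1+\ell_2$ is a $V$-triplet. Even then a residual issue remains: you tacitly assume the general point $x$ lies on the line component $\ell$; if the cycle through $x$ only has $x\in\gamma$, your triplet passes through $y$ but not necessarily through $x$, and an extra argument (e.g.\ producing a line of class $[\li]$ meeting the degenerate pair through $x$) is needed.

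The paper's own proof avoids both difficulties by never building chains through a prescribed point. From the covering family of degenerate pairs it extracts, arguing as in Proposition (\ref{3l}), two families $\li'$, $\li''$ with $[\li']+[\li'']=[C]$, with $\li'$ covering and $\li''$ horizontal and dominating for the rc$(\li')$-fibration; then, using that through a general point there is a cycle $\gamma+\ell$ of type $(C,\li)$, the family $\li$ is either contracted by the rc$(\li',\li'')$-fibration or horizontal and dominating with respect to it, so in either case the rc$(\li',\li'',\li)$-fibration contracts curves of $C$ and of $\li$, hence curves of $V$, and therefore maps $X$ to a point: $X$ is rc$(\li',\li'',\li)$-connected, i.e.\ connected by $V$-triplets of lines, contradicting (\ref{quasi}) directly. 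If you wish to keep your degeneration approach you must either close the $\V$-membership gap or reformulate it as indicated above and handle the position of $x$; otherwise the fibration argument is the cleaner path.
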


\begin{proof} 
Assume by contradiction that $C$ is not locally unsplit. Arguing as in Proposition (\ref{3l}) we can show that there are two families of lines $\li'$ and $\li''$ such that  $[\li']+[\li'']=[C]$, $\li'$ is covering and $\li''$ is horizontal
and dominating with respect to the rc$(\li')$-fibration. \\
Since through a general point there is a reducible cycle
$\gamma + \ell$, with $\gamma$ and $\ell$ parametrized by $C$ and $\li$, respectively, then either
curves of $\li$ are contracted by the rc$(\li',\li'')$-fibration or $\li$ is horizontal and dominating with respect
to this fibration.\\
In both cases the rc$(\li',\li'',\li)$-fibration contracts both curves parametrized by $C$ and curves  parametrized by $\li$, hence also curves parametrized by $V$ are contracted and $X$ is connected by $V$-triplets of lines, a contradiction.
\end{proof}

\medskip
\textbf{Case 1:} \,$\li^1$ is not a covering family.\par
\medskip
Denote by $ \li$ the covering family of lines.\\
Since no family of lines in $\mathcal B$ is covering, then the families of conics are dominating. 
Moreover they are locally unsplit, in view of Claim (\ref{newtriplets}).\\
For every $i= 1, \dots, k$ denote by $E_i$ the set $\loc(C^i, \El^i)_x$;
by Lemma (\ref{locy}) it has dimension $\dim E_i \ge n-1$; since 
 $E_i \subset \loc(\El^i)$, the inclusion is an equality and $E_i$ is an irreducible divisor.
 Moreover, by Corollary (\ref{numcor}) we have    $\cycx{E_i} =\langle [C^i], [\El^i] \rangle$.\\
We can  assume that $\li$ is not numerically proportional to $V$, otherwise the
rc$(\li)$-fibration would take $X$ to a point, and $\rho_X=1$ by Proposition (\ref{rhobound}).\\
This implies that there is at least a divisor, say $E_1$,
which is not trivial (hence positive) on $\li$; therefore the family $\li^1$ is horizontal and dominating with respect to the rc$(\li)$-fibration. We can assume that $[\li] \not \in \Pi_1$, otherwise the rc$(\li,\li^1)$-fibration $\pi: \rc{X}{Z}$ would go to a point and $\rho_X=2$.\\
Let $F$ be a fiber of $\pi$ and $x \in F \cap \loc(\li^1)$; then, by Proposition (\ref{iowifam}),
$$\dim F \ge \loc( \El^1,\El)_x \ge \dim \loc(\El^1)_x + 1 \ge \kd  \El^1 +1,$$
hence, recalling that $\kd (\li^1 +C^1)= n+1$,
 $$\dim Z \le n + K_X \cdot \li^1 -1 = (n-1) -(n+1) \kd C^1 = \kd C^1-2.$$
On the other hand, since $[\li] \not \in \Pi_1$ curves of $C^1$ are not contracted by $\pi$ and, 
by Claim (\ref{newtriplets}) $C^1$ is locally unsplit, we have,
by Proposition (\ref{iowifam}) $\dim Z \ge \dim \loc(C^1)_x \ge -K_X \cdot C^1 -1$, a contradiction. \par

\medskip
\textbf{Case 2} \quad $\li^1$ is a covering family.\par
\medskip
We will denote from now on the pair $(\li^1,C^1)$ by $(\El,C)$. If $C$ is quasi-unsplit, then
the rc$(\El, \C)$-fibration (which contracts the curves parametrized by $V$) goes to a point and $\rho_X=2$ by Proposition (\ref{rhobound}).\\
Therefore we can assume, from now on that
 $C$ is not quasi-unsplit. Let $x \in X$ be general; then $C_y$ is proper for any point $y \in \loc(\El_x)$; in fact, if this were not the case, then  through $x$ there would be a reducible cycle with numerical
class $[V]$, consisting of three lines, contradicting our assumptions.\\
By this property, we can apply  Corollary (\ref{numcor}) even if $C$ is not unsplit, to get that $\cycx{\loc(\El,\C)_x} = \langle [\El],[C] \rangle$ and Lemma (\ref{locy}) to get $\dim \loc(\li,\C)_x \ge n-1$; if the inequality is strict, then we get the contradiction $\rho_X=2$ by Lemma (\ref{numeq}).
(Notice that this is always the case if $n=2$, so from now on we can assume $n \ge 3$).\\
If equality holds, then an irreducible component of $ \loc(\El,\C)_x$ is a divisor, that we will call $D_x$.\\
If the intersection number $D_x \cdot \El$, which is nonnegative since $\El$ is a covering family, is positive,  we  have
 $X=\loc(\El)_{D_x}$ and again the contradiciton $\rho_X=2$ by Lemma (\ref{numeq}).\\
If else  $D_x \cdot \El=0$, then  every curve of $\El$ which meets $D_x$ is contained in it; 
in particular this implies that $x \in D_x$; this has two important consequences: the first one is that  $D_x \cdot V >0$; in fact being general, $x$ can be joined to another general point $x' \not \in D_x$ by a curve parametrized by $V$. The second one is that, since $x \in D_x \subset \loc(C)$ and $x$ is general, then $C$ is a dominating family, and so
it is locally unsplit by Claim (\ref{newtriplets}).\par
\medskip
Let  $(\ol,\oc) \in \mathcal B$ be a pair different from $(\El,C)$. 
If $\ol$ is not covering
then $\oc$ is dominating (and locally unsplit, by Claim (\ref{newtriplets})).\\ Then, since $x$ is general, $D_x$ meets a general curve of $\oc$; since $[\oc] \not \in \cycx{D_x}$ we have $D_x \cdot \overline C > 0$ and hence, by the same reason, $\dim \loc(\overline C_x)=1$,
forcing $-K_X \cdot \overline C=2$.\\
Recalling that $\kd (\ol + \oc) = \kd V = n+1$, we have $\kd \overline \El=n-1$, hence, by Lemma (\ref{locy}) we have $\dim  \loc(\overline \El, \El)_x = n$, and $\rho_X=2$ by Corollary (\ref{numcor}), a contradiction.\par
 \medskip
If also $\ol$ is covering we can repeat all the above arguments for the pair $(\ol, \oc)$. For a general $x$ we thus
have two divisors $D_x$ and $\overline{D}_x$, which clearly have non empty intersection.
In particular $D_x$ meets both $\loc(\ol)_x$ and $\loc(\oc)_x$. Since $D_x$ cannot contain
curves proportional either to $[\ol]$ or to $[\oc]$ we have $\dim \loc(\ol)_x=\dim \loc(\oc)_x=1$,
hence 
$$n+1 = \kd (\ol + \oc) \le \dim \loc(\ol)_x +1 +\dim \loc(\oc)_x +1 = 4.$$

So we are left with the case $n=3$. We can write  $X= \loc(\li)_{\overline{D}_x}$ for a general $x$, hence $\rho_X = 3$ by Corollary (\ref{numcor}); moreover, by Lemma (\ref{face2}) we have that
 $[\li]$ generates an extremal ray of $\cone(X)$ and that  $[\oc]$ belongs to a two dimensional
face $\Sigma$ of $\cone(X)$ containing $[\li]$.\\ 
Let  $\ell'\cup \ell''$ be a reducible cycle  parametrized by $\overline \C$, whose components are not numerically proportional; notice that $[\ell']$ and $[\ell'']$ belong to $\Sigma$.\\
The divisor $K_X +H$ is trivial on $\oc$, and is negative on $\li$, therefore, up to exchange the cycles, it is negative
on $\ell'$ and positive on $\ell''$. In particular we have $-K_X \cdot \ell' \ge 2$.
Let $\li'$ (resp. $\li''$) be a family of deformations of $\ell'$ (resp. $\ell''$); if $\li'$ is not covering then $X=\loc(\ol)_{\loc(\li')_x}$ and
$\rho_X=2$, a contradiction. 
If else $\li'$ is covering, then we can take the rc$(\ol,\li',\li'')$-fibration, which goes to a point, hence $X$ is connected by $V$-triplets of lines, against the assumptions. \qed

\section{RCC manifolds covered by lines: scroll structure}

In this section we are going to prove Theorem (\ref{main2}):

\begin{theorem*}
Let $(X,H)$ be RCC with respect to a family $V$, assume that $(X,H)$
is covered by lines and that $\rho_X = 3$. Then there is a covering family of lines
whose numerical class spans an extremal ray of $\cone(X)$ and the associated extremal contraction $\f:X \to Y$ is an adjunction scroll over a smooth variety $Y$.
\end{theorem*}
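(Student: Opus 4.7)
My plan is to combine Theorem \ref{main1} with the fibration structure of Proposition \ref{3fam} and to invoke Lemmas \ref{face} and \ref{face2} to produce an extremal ray generated by a covering family of lines, and then to analyse the associated contraction as an adjunction scroll. By Theorem \ref{main1} there exist three numerically independent families of lines $\li^1, \li^2, \li^3$ with $[V] = [\li^1] + [\li^2] + [\li^3]$ such that $X$ is rc$(\li^1, \li^2, \li^3)$-connected; after reordering as in Proposition \ref{3fam}, $\li^1$ is a covering family, $\li^2$ is horizontal and dominating with respect to the rc$(\li^1)$-fibration, and $\li^3$ is horizontal and dominating with respect to the rc$(\li^1, \li^2)$-fibration $\pi \colon X \dashrightarrow Z$.

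Next I will establish the decomposition $X = \loc(\li^2, \li^1)_{\cloc_m(\li^3)_x}$ for a general $x \in X$ and some positive integer $m$, which by Lemma \ref{face2} forces $[\li^1]$ to span an extremal ray $R$ of $\cone(X)$. The weaker mixed-chain statement $X = \cloc_{m_1}(\li^1, \li^2)_{\cloc_{m_2}(\li^3)_x}$ is almost automatic: since $\li^3$ is horizontal and dominating over $\pi$, the image of $\cloc_{m_2}(\li^3)_x$ in $Z$ exhausts $Z$ for $m_2$ large enough, and the bounded-diameter statement of Theorem \ref{rcfib} applied inside each fiber of $\pi$ (an rc$(\li^1, \li^2)$-equivalence class) fills that fiber in at most $2^{\dim X - \dim Z} - 1$ further chain steps. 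Upgrading to the single-step $\loc(\li^2, \li^1)$ is the technical heart of this part: using Lemma \ref{locy} and the covering property of $\li^1$, I expect to show that $\loc(\li^1)$ of a codimension-one subset of a general fiber of $\pi$ already fills that fiber, so that a single $\li^2$-step followed by a single $\li^1$-step suffices.

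Finally, let $\f \colon X \to Y$ be the contraction of $R$. Since $\li^1$ is covering and contracted by $\f$, $\f$ is of fiber type; set $s = \dim X - \dim Y$. By Lemma \ref{numcon} every curve in a fiber of $\f$ is numerically proportional to $[\li^1]$, and Proposition \ref{iowifam} applied to the covering family $\li^1$, together with the containment of $\loc(\li^1)_x$ in the fiber $F_x$ of $\f$ through $x$, gives $s \ge -K_X \cdot \li^1 - 1$. For the adjunction-scroll property I need the equality $-K_X \cdot \li^1 = s + 1$, which makes $K_X + (s + 1)H$ a supporting divisor of $\f$; this will follow by showing that the inequality in Proposition \ref{iowifam} is tight for $\li^1$, using the additional constraints imposed by the horizontal families $\li^2, \li^3$ on the extremal ray $R$. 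The last and most delicate step, the main obstacle, is to bound the dimension of every fiber of $\f$ by $s + 1$: once achieved, \cite[Proposition 2.5]{BW} realises $X$ as the projectivisation of a B\v{a}nic\v{a} sheaf on the smooth variety $Y$. Jumping fibers of dimension $\ge s + 2$ would carry curves whose numerical classes violate the two-dimensional face structure of $\cone(X)$ extracted in the previous paragraph, while Example \ref{?} shows that a jump of exactly one can occur, so $s + 1$ is the sharp bound.
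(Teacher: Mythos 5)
Your route to extremality has a real gap at the step you describe as ``almost automatic''. That $\li^3$ is horizontal and dominating with respect to $\pi^2$ only means that $\loc(\li^3)$ dominates $Z^2$; it does not imply that chains of $\li^3$-curves emanating from a single point dominate $Z^2$, and if $\li^3$ is not covering a general $x$ lies on no $\li^3$-curve at all, so $\cloc_{m_2}(\li^3)_x$ is empty. In the paper's own Example (\ref{?}) (a general divisor of type $(1,1,1)$ in $\proj^{r_1}\times\proj^{r_2}\times\proj^{r_3}$) every chain of $\li^3$-lines through $x$ lies in one fiber of the projection to $\proj^{r_1}\times\proj^{r_2}$, and its image in $Z^2\simeq\proj^{r_3}$ is a hyperplane, so even the weaker equality $X=\cloc_{m_1}(\li^1,\li^2)_{\cloc_{m_2}(\li^3)_x}$ fails; rc$(\li^1,\li^2,\li^3)$-connectedness only gives chains in which the three families are interleaved arbitrarily, which is not the ordered input that Lemmas (\ref{face}) and (\ref{face2}) require. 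This is precisely why the paper argues by cases on how many of the $\li^i$ are covering, using \cite[Proposition 1, (ii)]{BCD} to produce a larger-than-generic rc-equivalence class, nef divisors built from loci on which the families are trivial, and the fiber locus inequality, and why its conclusion is only that \emph{some} covering family among the $\li^i$ spans an extremal ray -- not necessarily the family $\li^1$ singled out by the ordering of Proposition (\ref{3fam}), which is what you set out to prove.

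The second half of your plan is aimed at a statement the paper does not prove and in fact deliberately avoids. You want to show that the contraction $\f$ of $\mathbb{R}_+[\li^1]$ itself satisfies $-K_X\cdot\li^1=\dim X-\dim Y+1$ and has all fibers of dimension at most $\dim X-\dim Y+1$, asserting that larger jumping fibers ``would carry curves whose numerical classes violate the face structure''; no mechanism for this is given, and the paper's argument shows that no contradiction arises in these configurations. When $\f$ has a fiber of dimension $\ge -K_X\cdot\li^1+1$, or when its general fiber already has dimension $-K_X\cdot\li^1$, the paper deduces instead that $\li^2$ (or $\li^3$) is covering, that its class is extremal, and that the associated contraction is equidimensional with $H$-degree-one curves in its fibers, hence a projective bundle by \cite[Lemma 2.12]{Fuj4}; it is \emph{this other} contraction that furnishes the adjunction scroll of the statement. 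Without this switch to a different covering family your argument cannot handle those cases, and the equality $-K_X\cdot\li^1=\dim X-\dim Y+1$, which you say ``will follow'' from tightness in Proposition (\ref{iowifam}), is left unjustified as well.
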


\begin{proof} By Theorem (\ref{main1}) and Proposition (\ref{3fam}) we know that
there exist three families of lines $\El^1, \El^2, \El^3$ such that $[\li^1]+[\li^2]+[\li^3]=[V]$.\\
Moreover $\El^1$ is covering, $\El^2$ is horizontal and dominating with respect to the
rc$(\El^1)$-fibration  $\pi^1: \rc{X}{Z^1}$, and $\El^3$ is horizontal and dominating with respect to the rc$(\El^1,\El^2)$-fibration $\pi^2: \rc{X}{Z^2}$.\\
We will first show that  among the families $\li^i$ which are covering there is (at least) one whose numerical class generates an extremal ray of $\cone(X)$. 
To this end, we divide the proof into cases, according to the number of families among the
$\El^i$ which are covering; notice that, as shown by the examples in Section \ref{exa} all
cases do really occur.\par
\bigskip
{\bf Case 1:} The families $\El^i$, $i=1, \dots, 3$ are all covering.\par
\medskip
Assume that $[\El^3]$ does not span an extremal ray; then, by \cite[Proposition 1, (ii)]{BCD} there is a rc$(\El^3)$-equivalence class of dimension greater than the general one, hence an irreducible component $G$ of this class of dimension $\dim G \ge -K_X \cdot \El^3$.\\
Consider $\loc(\El^1,\El^2)_{G}$; by Lemma (\ref{locy}) its dimension is at least $n-1$; if this dimension
is $n$ then  $[\El^1], [\El^2]$ lie in a two-dimensional
extremal face of $\cone(X)$ by Lemma (\ref{face}).\\
We can draw the same conclusion if an irreducible component of $\loc(\El^1,\El^2)_{G}$ is a divisor $D$.
Infact, if $D$ is positive either on $\El^1$ or on $\El^2$ we have $X=\cloc_{m_1}(\El^1,\El^2)_G$ and we apply again Lemma (\ref{face}). If else $D \cdot \El^1= D \cdot \El^2 =0$, recalling that the numerical class in $X$ of every curve in $D$ can be written as $\sum a_i [\El^i]$ with $a_3 \ge 0$ by Lemma (\ref{numcon}), we get that $D_{|D}$ is nef,
hence $D$ is nef and is a supporting divisor of a face which contains $[\El^1]$ and $ [\El^2]$.\par
We can repeat the same argument starting from another family, say $\El^2$; therefore we prove that,
if neither $[\El^3]$ nor $[\El^2]$ spans an extremal ray, then $[\El^1]$ belongs to two different extremal
faces of $\cone(X)$, hence to an extremal ray.\par
\bigskip

{\bf Case 2:} Two families among the $\El^i$, $i=1, \dots, 3$ are covering.\par
\medskip
If the second covering family is $\El^3$, then it is horizontal and dominating with respect to $\pi^1$; 
moreover, since $X$ is rc$(\El^1,\El^2,\El^3)$-connected, $\El^2$  will be horizontal and
dominating with respect to the rc$(\El^1,\El^3)$-fibration, so, withouth loss of generality
we can assume that $\El^2$ is covering and $\El^3$ is not.\par
\medskip
Assume that $\text{codim} \loc(\El^3) \ge 2$; by Proposition (\ref{iowifam}) we have, for any
$x \in \loc(\El^3)$,  that $\dim \loc(\El^3)_x \ge -K_X \cdot \El^3 +1$.
By Lemma (\ref{locy}), for such an $x$  we have $X= \loc(\El^3,\El^2,\El^1)_x$, hence
$[\li^1]$ is extremal by Lemma (\ref{face2}).\par
\medskip
Assume now that, for a general  $x \in \loc(\El^3)$ we have  $\dim \loc(\El^3)_x = -K_X \cdot \El^3$;
by Proposition (\ref{iowifam}) then $\loc(\El^3)$ is a divisor, that we denote by $D^3$; since
$\li^3$ is horizontal and dominating with respect to $\pi^2$, then, for some $i=1,2$ we have
$D^3 \cdot \li^i >0$.\\
Consider $\loc(\li^2,\li^1)_{\loc(\li^3)_x}$; if it equals $X$, then $[\li^1]$ is extremal by
Lemma (\ref{face2}).
If else   $\loc(\li^2,\li^1)_{\loc(\li^3)_x}$ has dimension $n-1$, we take $D$ to be an irreducible component.
If  $D$ is positive on $\li^1$ or $\li^2$ then $X=\cloc_{m_1}(\El^1,\El^2)_{\loc(\li^3)_x}$ and $[\li^1]$ and $[\li^2]$ are in an extremal face by Lemma (\ref{face}).\\
So assume that $D \cdot \li^1= D \cdot \li^2 = 0$; as in Case 1 we can prove that $D$ is nef.\\ 
Since $D$ is trivial on $\li^1$ and $\li^2$ then $[\li^1]$ and $[\li^2]$ are contained in an extremal  face $\Sigma$.\par
If neither $[\li^1]$ nor $[\li^2]$ is extremal, then, being $-K_X$ positive on $\li^1$ and $\li^2$, there is an extremal ray $R$ in $\Sigma$.
Without loss of generality we can assume that $[\li^1]$ is in the interior of the cone spanned by  $[\li^2]$ and $R$.
Let $B$ be the indeterminacy locus of $\pi^1$, let $D_Z$ be a very ample divisor on $\pi^1(X \setminus B)$ and let $ \widehat D_1:=\overline{(\pi^1)^{-1}D_Z}$. The divisor   $\widehat D_1$ is trivial on $\li^1$ and positive on $\li^2$, since this family is covering, hence $\widehat D_1$ is negative on $R$, so the exceptional locus of $R$ is contained in the indeterminacy locus
 of $\pi^1$ and so has codimension at least two in $X$.\\
Let $F$ be a fiber of the contraction associated to $R$; by Proposition (\ref{fiberlocus}) $F$
has dimension $\dim F \ge 2$.  Then $\dim \loc(\El^1,\El^2)_{F} \ge
 -K_X \cdot (\El^1+\El^2)$ by Lemma (\ref{locy}). \\
Since $D^3 \cdot \El^i>0$ for some $i$
the intersection $\loc(\El^3)_x \cap  \loc(\El^1,\El^2)_F$ is not empty for some $x$, therefore
$$\dim \loc(\El^3)_x \cap  \loc(\El^1,\El^2)_F \ge \kd (\li^1+\li^2+\li^3) - n \ge 1,$$
a contradiction, since the numerical class of every curve in $\loc(\El^1,\El^2)_F$ belongs to $\Sigma$
and $[\li^3]$ does not.\par
\bigskip
{\bf Case 3:} Only $\El^1$ is covering.\par
\medskip

Let $F$ be a general fiber of $\pi^2$; it contains $\loc(\El^2,\El^1)_x$ for some $x\in \loc(\El^2)\cap F$, hence, by Proposition (\ref{iowifam}), it has dimension
\begin{equation}\label{1}
 \dim F \ge -K_X \cdot (\El_1 +\El_2)-1 .
\end{equation}
It follows that $\loc(\li^3)_x$ - which has dimension $\loc(\li^3)_x \ge \kd \li^3$ by the same proposition - dominates the target, hence 
\begin{equation}\label{2}
\dim Z^2=\dim X - \dim F \ge \dim \loc(\li^3)_x.
\end{equation}
Combining (\ref{1}) and (\ref{2}) we get that $\dim \loc(\li^3)_x = -K_X \cdot \li^3 = \dim Z^2$;
by the first equality and  Proposition (\ref{iowifam}) we get that  $\dim \loc(\li^3) = n-1$, while from the second
we infer that, for some $m$,  $X= \cloc_m(\El^1,\El^2)_{\loc(\El^3)_x}$ and
we apply Lemma (\ref{face}) to get that
$[\El^1]$ and $[\El^2]$ are contained in an extremal face $\sigma$.\\
Let $D^3$ be an irreducible component of $\loc(\li^3)$. If $D^3 \cdot \li^1 >0$ then we can exchange the role of $\El^2$ and $\El^3$ and obtain that
$[\El^1]$ and $[\El^3]$ belong to a face $\Sigma'$. Therefore, belonging to two different
extremal faces, $[\li^1]$ belongs to an extremal ray.\\
If else $D_3 \cdot \li^1=0$ then $D^3 \cdot \li^2 >0$, since $\li^3$ is horizontal and dominating with respect to $\pi^2$. Therefore, if $[\li^1]$ were not extremal in $\Sigma$, then there
would be a curve $C$, with $[C] \in \Sigma$ such that $D_3 \cdot C <0$.
Counting dimensions we can write $D^3=\loc(\El^2,\El^1,\El^3)_x=\loc(\li^1,\li^3)_{\loc (\El^2)_x}$, hence the numerical class of every
curve in $D^3$ can be written as $\alpha [\El^1] + \beta [\El^2] + \gamma [\El^3]$ with $\beta \ge 0$.
In particular every curve in $D^3$ whose class is in $\Sigma$  is numerically equivalent to $\alpha [\li^1] + \beta [\li^2]$ with $\beta \ge 0$, so $D^3$ is nef on $\Sigma$,
a contradiction.\par
\medskip
We have thus shown the first part of the statement; we can assume that the covering family which spans an extremal
ray $R$ of $\cone(X)$ is $\li^1$.
We will now show that there is an extremal contraction of $X$ which
is a special B\v{a}nic\v{a} scroll.\\
Let $\f:X \to Y$ be the contraction associated to $R$. If the general fiber of $\f$ has dimension
$-K_X \cdot \li^1-1$ and any fiber has dimension $\le -K_X \cdot \li^1$, then $\f$ is a special B\v{a}nic\v{a} scroll.\\
We will prove that, if this is not the case, then there is another contraction of $X$
which is a projective bundle.\par
\medskip
Assume first $\f$ has a fiber $F$ of dimension $\ge -K_X \cdot \li^1 +1$. Pick $x$ such that
 $\loc(\li^3,\li^2)_x$ meets $F$; for such a point we have 
 $$\dim \loc(\li^3,\li^2)_x + \dim F \le n,$$ 
 hence $$ \dim \loc(\li^3,\li^2)_x\le -K_X \cdot (\li^2+\li^3) -2,$$
from which we get that $\dim \loc(\li^3)_x = -K_X \cdot \li^3 -1$, and $\li^3$ is covering by Proposition (\ref{iowifam}).
Being $\li^3$ covering, we can swap the roles of $\li^2$ and $\li^3$ and, by the same argument,
we get that also $\li^2$ is covering.\\
It follows that both $\loc(\li^2,\li^3)_F$ and $\loc(\li^3,\li^2)_F$ are nonempty, hence, by Lemma (\ref{locy}), 
we have  $X=\loc(\li^2,\li^3)_F = \loc(\li^3,\li^2)_F$; by Lemma (\ref{face2}) both $[\li^2]$ and $[\li^3]$ span
an extremal ray.\\
Let $\psi:X \to Y'$ be the contraction of $\mathbb R_+[\li^2]$; we have 
$$\dim Y' \le n +K_X \cdot \li^2 +1 \le -K_X \cdot (\li^1+\li^3);$$
 on the other hand,
since no curve in $\loc(\li^3)_F$ is contracted by $\psi$ we have  
$$\dim Y' \ge \dim \loc(\li^3)_F \ge \dim F -K_X \cdot \li^3 -1 \ge -K_X \cdot (\li^1+\li^3).$$ 
It follows that all inequalities are equalities; in particular a general fiber of $\psi$ has dimension $-K_X \cdot \li^2-1$ and
$\loc(\li^3)_F$ meets every fiber, hence $\psi$  is 
equidimensional. Recalling that $H \cdot \li^2=1$ we get that $\psi$ is a projective bundle by \cite[Lemma 2.12]{Fuj4}.\par
\medskip
Assume now that every fiber of $\f$ has dimension $-K_X \cdot \li^1$.\\
If $\li^2$ is not a covering family then fibers of $\pi^2$ have dimension $\ge -K_X \cdot (\li^1+\li^2)$, therefore 
$\li^3$ is covering. Therefore at least one among $\li^2$ and $\li^3$ is covering. Assume that it is $\li^2$.\\
If $[\El^2]$ does not span an extremal ray then, by \cite[Proposition 1, (ii)]{BCD}, there is a rc$(\El^2)$-equivalence class of dimension greater than the general one, hence an irreducible component $G$ of this class of dimension $\dim G \ge -K_X \cdot \El^2$.\\
Let  $\widetilde G= \f^{-1}(\f(G))$; it has dimension $\dim \widetilde G \ge -K_X \cdot (\li^1+\li^2)$.
For some $x$ we have $\loc(\li^3)_x \cap \widetilde G \not = \emptyset$, which yields
$$\dim \loc(\li^3)_x \le n - \dim \widetilde G \le -K_X \cdot \li^3 -1,$$
and $\li^3$ is a covering family.\\
By Lemma (\ref{locy}) we have $X=\loc(\li^3)_{\widetilde G}$, hence 
 $\widetilde G$ meets all the rc$(\li^3)$ classes, which are thus equidimensional, and $\li^3$ is extremal.
As above we can show that the associated contraction is a projective bundle.
\end{proof}

\bigskip
\noindent
\small{{\bf Acknowledgements}. 
We would like to thank Francesco Russo and Massimiliano Mella for many useful comments and suggestions.\par
\medskip
\noindent
\small{{\bf Note}. 
This work grew out of a part of the second author's Ph.D. thesis \cite{P} at the
Department of Mathematics of the University of Trento.\par

{\small \textsc{Gianluca Occhetta} \\
Dipartimento di Matematica \\
Universit\`a degli Studi di Trento \\
Via Sommarive 14\\
I-38050 Povo (TN), Italy\\
e-mail: gianluca.occhetta@unitn.it}\par
\medskip

{\small \textsc{Valentina Paterno} \\
Dipartimento di Matematica \\
Universit\`a degli Studi di Trento \\
Via Sommarive 14\\
I-38050 Povo (TN), Italy\\
e-mail: paterno@science.unitn.it}
\label{finishpage}

\end{document}